\newcommand{\Z}{\mathbb{Z}}
\newcommand{\Q}{\mathbb{Q}}
\newcommand{\R}{\mathbb{R}}
\newcommand{\GL}{\mathrm{GL}}
\newcommand{\SL}{\mathrm{SL}}
\newcommand{\Sp}{\mathrm{Sp}}
\newcommand{\Spf}{\Sp_4}
\newcommand{\bs}{\backslash}
\newcommand{\gk}{Gro{\ss}enkammer}
\newcommand{\gkn}{Gro{\ss}enkammern} 
\DeclareMathOperator{\vcd}{\mathrm{vcd}}
\newtheorem*{corollary}{Corollary}
\newtheorem{lemma}{Lemma}
\newtheorem{proposition}{Proposition}
\newtheorem{theorem}{Theorem}
\theoremstyle{definition}
\newtheorem{definition}{Definition}
\theoremstyle{remark}
\newtheorem*{remark}{Remark}
\begin{document}

\title{Computing Hecke Operators for Arithmetic Subgroups of $\Spf$}
\author{Dylan Galt \and Mark McConnell}

\maketitle


\section{Introduction} \label{sec:intro}

Let~$\mathcal{Q}$ be the space of positive definite real symmetric
bilinear forms in $n$~variables.  This is an open convex cone in the
vector space of real symmetric bilinear forms.  We
identify~$\mathcal{Q}$ with the positive definite $n\times n$
symmetric matrices.  Let $X_\SL$ be the quotient of~$\mathcal{Q}$ by
homotheties; this is the Riemannian symmetric space for $\SL_n(\R)$.
The group $\SL_n(\Z)$ acts properly discontinuously on $X_\SL$,
generalizing the classical action of $\SL_2(\Z)$ on the upper
half-plane.  Let~$\Gamma_\SL$ be an arithmetic subgroup of
$\SL_n(\Z)$.  Let~$\rho$ be a suitable local system of coefficients
on~$X_\SL$; the first lines of Section~\ref{sec:hos} will specify
which~$\rho$ we use.

The paper~\cite{MM20} introduced an algorithm for computing Hecke
operators on the equivariant cohomology $H^i_{\Gamma_\SL}(X_\SL;
\rho)$.  When~$\rho$ is over a field of characteristic zero, or of
characteristic not dividing the order of any torsion element
of~$\Gamma_\SL$, this is isomorphic to the ordinary cohomology
$H^i(\Gamma_\SL\bs X_\SL; \rho)$.  The algorithm in~\cite{MM20} works
for any~$\rho$ and for all $i = 0, 1, 2, \dots, \vcd(\Gamma_\SL)$,
where $\vcd(\Gamma_\SL)=\dim(\mathcal{Q}) - n = \frac12n(n-1)$ is the
virtual cohomological dimension.




The present paper extends~\cite{MM20} to the symplectic group for
$n=4$.  Let $\Spf(\R)$ be the subgroup of $\SL_4(\R)$ that preserves the
skew-symmetric bilinear form with matrix
\[
\Omega = \begin{bmatrix} 0&0&0&1 \\ 0&0&1&0 \\ 0&-1&0&0
  \\ -1&0&0&0 \end{bmatrix}.
\]
Let $X$ be the Riemannian symmetric space for $\Spf(\R)$. This is the
submanifold of~$\mathcal{Q}$ consisting of those $A\in\mathcal{Q}$
satisfying the symplectic condition $A \Omega A^t = \Omega$.  Working
mod homotheties, $X$ is embedded in~$X_\SL$.  Let~$\Gamma = \Gamma_\SL
\cap \Spf(\Z)$, where we always suppose $\Gamma_\SL$ is chosen so
that~$\Gamma$ is an arithmetic subgroup of $\Spf(\Z)$.  If~$\Gamma$ is
torsion free, $\Gamma\bs X$ is a smooth complex algebraic variety
called a Siegel modular threefold.

In this paper, we outline an algorithm for computing Hecke operators
on the equivariant cohomology $H^i_\Gamma(X; \rho)$.  The algorithm
works for any local coefficient system~$\rho$ and for all~$i$.

\subsection{Well-Tempered Complexes}

The algorithm for~$\SL_n$ in~\cite{MM20} uses the \emph{well-tempered
  complex} $\widetilde{W}^+$.  This is a regular cell complex of
dimension $\vcd(\Gamma_\SL) + 1$.  For a certain $\tau_0>1$, it is a
fibration $\widetilde{W}^+ \to [1, \tau_0^2]$, where the
coordinate~$\tau$ in the base is called the \emph{temperament}.  Let
$\widetilde{W}_\tau$ be the fiber over~$\tau$.  Each fiber is a
contractible cell complex of dimension $\vcd(\Gamma_\SL)$ on which
$\Gamma_\SL$ acts with finitely many orbits of cells.  The fiber
$\widetilde{W}_1$ is the well-rounded retract of \cite{Ash84}.
As~$\tau$ varies, there are a finite number of \emph{critical
  temperaments} $\tau^{(i)}$ where the cell structure of the fibers of
$\widetilde{W}^+$ abruptly changes.  On the intervals between
consecutive critical temperaments, the cell structure does not change
from fiber to fiber.  See Figures~\ref{fig:wrr2T2}
and~\ref{fig:wrr2T2near2} below for examples.

This paper's new algorithm for $\Spf$ uses a subcomplex
$\widetilde{V}^+$ of $\widetilde{W}^+$ for $n=4$.
This~$\widetilde{V}^+$ is a regular cell complex of dimension
$\vcd{\Gamma_\SL}+1$ and is a fibration $\widetilde{V}^+ \to [1,
  \tau_0^2]$.  Every fiber has dimension $\vcd{\Gamma_\SL} = 6$.  The
complex $\widetilde{V}^+$ and all its fibers admit an action
of~$\Gamma$ with only finitely many orbits of cells.  We define the
fiber $\widetilde{V}_1$ in Definition~\ref{defV1}; in the last
Section, we discuss how to compute the other fibers.

The $\widetilde{V}_\tau$ are not the complexes we would prefer to use.
\cite{MM93} introduced a cell complex $\widetilde{V}$ (called~$W$ in
that paper) whose dimension is~$4$, the true vcd of $\Spf(\Z)$.  The
complex~$\widetilde{V}$ is contractible and hence acyclic, and
$\Spf(\Z)$ acts on it with only finitely many orbits of cells.  In
\cite{MM89}, the combinatorics of the cells of~$\widetilde{V}$ are
described in terms of classical projective configurations in the
symplectic projective three-space $\mathbb{P}^3(\Q)$ endowed with the
form~$\Omega$.  Our~$\widetilde{V}_1$ in this paper is a
thickening\footnote{The notation was chosen because the letter~$V$ is
  thinner than~$W$.}  of~$\widetilde{V}$, of dimension~$6$.  More
precisely, it follows from \cite{MM93} that there is an
$\Spf(\Z)$-equivariant embedding of $\widetilde{V}$ as a subcomplex of
the first barycentric subdivision of $\widetilde{V}_1$.

Our main theorem is Theorem~\ref{thickening}, which says
that~$\widetilde{V}$ and $\widetilde{V}_1$ have the same cohomology.
This implies that $\widetilde{V}_1$ is itself an acyclic cell complex
on which $\Spf(\Z)$ acts with only finitely many stabilizers of cells.
As such, $\widetilde{V}_1$ is suitable for computing the equivariant
cohomology of~$\Gamma$.
The advantage of $\widetilde{V}_1$ over~$\widetilde{V}$ is that we can
extend $\widetilde{V}_1$ to $\widetilde{V}^+$, obtaining a Hecke
algorithm along the lines of \cite{MM20}.  The proof of
Theorem~\ref{thickening} appears in Section~\ref{sec:main}.

In Section~\ref{sec:wtcV}, we outline a computational method which,
conjecturally, would construct the fibers $\widetilde{V}_\tau$ for
$\tau > 1$ and show they are contractible.  Once these computations
were carried out, the rest of the Hecke operator algorithm would
proceed as in~\cite{MM20}.  We emphasize that Section~\ref{sec:wtcV}
is speculative, unlike the earlier sections.  Details for
Section~\ref{sec:wtcV} will appear in a later paper.

We summarize our notation.

\begin{center}
  \begin{tabular}{|c|l|}
    \hline
    $\widetilde{W}^+$ & well-tempered complex for $\SL_4(\mathbb{R})$ \\
    $\widetilde{W}_1$ & well-rounded retract for $\SL_4(\mathbb{R})$ at temperament~$1$ for $\widetilde{W}^+$ \\
    $\widetilde{V}$ & contractible complex for $\Spf(\mathbb{R})$ from \cite{MM93} \\
    $\widetilde{V}^+$ & the new acyclic subcomplex of $\widetilde{W}^+$ introduced in this paper \\
    $\widetilde{V}_1$ & cell complex at the first temperament for $\widetilde{V}^+$ \\
    \hline
    \end{tabular}
\end{center}


\subsection{Acknowledgments}

Avner Ash's paper~\cite{Ash84} is foundational for both~\cite{MM20} and
this paper.  Paul Gunnells suggested to us that combining~\cite{MM93}
and~\cite{MM20} might give a Hecke operator algorithm for $\Spf$.  We
thank both of them for these and many other helpful conversations.  We
also thank Robert MacPherson and Dan Yasaki.


\section{The Well-Tempered Complex for $\SL_n(\Z)$}
\label{sec:wtcz}

Here is a summary of~\cite{MM20}.  That paper concerns $\GL_n$ over
any division algebra~$D$ of finite dimension over~$\Q$.  We now
specialize to $D=\Q$, so that all arithmetic groups~$\Gamma$ are
subgroups of $\Gamma_0 = \GL_n(\Z)$.  Throughout this
Section~\ref{sec:wtcz}, we deal only with the objects called $X_\SL$
and $\Gamma_\SL$ in the Introduction, so we drop the subscripts~$\SL$
from those symbols.

A $\Z$-lattice in $\R^n$ is a finitely generated discrete subgroup that
contains an $\R$-basis.  $G = \GL_n(\R)$ acts on the right on row
vectors in $\R^n$, and $\Gamma_0 = \GL_n(\Z)$ stabilizes the standard
lattice $L_0 = \Z^n$.  Let $Y = \Gamma\bs G$.  We view~$Y$ as a space
of lattices, whose elements are $L_0 g$; the lattices have extra
structure, such as a level structure, when $\Gamma \subsetneqq
\Gamma_0$.  The group preserving the standard inner product $\langle
\,,\, \rangle$ on~$\R^n$ is the maximal compact subgroup $K =
\mathrm{O}_n \subset G$, and $X = G/K$ is the corresponding symmetric space.

\subsection{The Well-Rounded Retract}

\begin{definition} \label{arithmin}
  Let $L = L_0 g \in Y$.  The \emph{arithmetic minimum of}~$L$ is
  $m(L) = \min \{ \langle x, x\rangle \mid x \in L - \{0\}\}$.  The
  \emph{minimal vectors} are $M(L) = \{x \in L \mid \langle x,
  x\rangle = m(L) \}$.  We say~$L$ is \emph{well rounded} if $M(L)$
  spans $\R^n$.  The set of well-rounded lattices in~$Y$ with
  minimum~1 is denoted~$\widehat{W}$.
\end{definition}

The functions $m$ and~$M$ are $K$-invariant.  Hence $\widehat{W}$ is
$K$-invariant.

\begin{theorem}[{\cite[Thm.~2.11]{Ash84}}] \label{wrrmain}
  $W = \widehat{W}/K$ is a strong deformation retract of $Y/K$.  It is
  compact and of dimension $\vcd\Gamma_0$.  The universal
  cover\footnote{Strictly speaking, this is a ramified cover, because
    certain points of~$W$ have finite stabilizer subgroups
    in~$\Gamma_0$.  The barycentric subdivision in the last sentence
    of the theorem produces a triangulation that is compatible with
    the ramified covering map.}  $\widetilde{W}$ of~$W$ is a locally
  finite regular cell complex in~$X$ on which $\Gamma_0$ acts
  cell-wise with finite stabilizers of cells.  This cell structure has
  a natural barycentric subdivision which descends to a finite cell
  complex structure on~$W$.
\end{theorem}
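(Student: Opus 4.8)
This is Ash's theorem, and the plan is to follow his argument. Identify $X = G/K$ with the cone $\mathcal{Q}$ of positive definite quadratic forms (sending $gK$ to $gg^t$), so that the arithmetic minimum $m(q)$ and the minimal-vector set $M(q)$ are defined, and let $\mathcal{Q}_1 \subset \mathcal{Q}$ be the cross-section $\{q : m(q) = 1\}$. For a finite set $S$ of primitive vectors of $\Z^n$ (closed under negation) that spans $\R^n$, put
\[
  \sigma(S) = \{\, q \in \mathcal{Q} : q(v) = 1 \ (v \in S),\ q(w) \ge 1 \ (w \in \Z^n\setminus\{0\})\,\}.
\]
Each condition $q(w) \ge 1$ cuts out a closed half-space and each $q(v) = 1$ an affine hyperplane in the entries of $q$, so $\sigma(S)$ is convex; Cauchy--Schwarz bounds its entries, and a degenerate positive semidefinite form cannot satisfy all of the $q(w) \ge 1$ (lattice vectors crowd its kernel, forcing $\inf_w q(w) = 0$), so $\sigma(S)$ is a compact convex polytope in $\mathcal{Q}$. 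One checks that, for the $S$ arising as some $M(q)$, the relative interior of $\sigma(S)$ is exactly $\{q : M(q) = S\}$ and the proper faces of $\sigma(S)$ are the $\sigma(S')$ with $S' \supsetneq S$; hence the well-rounded locus $\widetilde{W} = \{q \in \mathcal{Q}_1 : M(q) \text{ spans } \R^n\}$ is the union of the $\sigma(S)$ over spanning $S$, and these are the candidate cells.

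The core of the proof is a $\Gamma_0$- and $K$-equivariant \emph{strong} deformation retraction of $\mathcal{Q}_1$ onto $\widetilde{W}$. Given $q \in \mathcal{Q}_1$ not well rounded, let $U = \mathrm{span}_\R M(q) \subsetneq \R^n$ and let $U^\perp$ be its $q$-orthogonal complement; deform $q$ by scaling $q|_{U^\perp}$ down while holding $q|_U$ fixed. The minimal vectors of $q$ lie in $U$, so they remain minimal and $m$ stays equal to $1$ until some new lattice vector first reaches length $1$; such a vector lies outside $U$, so $\dim \mathrm{span}_\R M$ strictly increases. Since it cannot exceed $n$, iterating reaches a well-rounded form after finitely many stages, and well-rounded forms are stationary, so this is a strong deformation retraction. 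The flow direction is built canonically out of $q$ and $M(q)$, hence it is automatically $\Gamma_0$- and $K$-equivariant; composing with the radial retraction $q \mapsto q/m(q)$ of $\mathcal{Q}$ onto $\mathcal{Q}_1$ and then passing to the quotient yields the retraction of $Y/K$ onto $W$.

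The remaining assertions are structural. Local finiteness: near a given $q$ only the finitely many $w$ with $q(w)$ close to $m(q)$ can be minimal for nearby forms, so only finitely many $\sigma(S)$ meet a neighborhood of $q$. Regular cell complex: the cells are compact convex polytopes, their relative interiors partition $\widetilde{W}$, and the boundary of each cell is the union of its faces, which are again cells, with the inclusion as characteristic map. Reduction theory: a well-rounded form of minimum $1$ has $\gamma_n^{-n} \le \det q \le 1$ (Minkowski's bound, and Hadamard's inequality on a basis of minimal vectors), so Mahler's compactness criterion makes $\widetilde{W}$ relatively compact modulo $\GL_n(\Z)$; hence $W$ is compact and there are only finitely many $\Gamma_0$-orbits of cells. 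The top cells, where $|S|$ is least (namely $S$ a $\Z$-basis up to sign), have dimension $\dim\mathcal{Q} - n = \tfrac12 n(n-1) = \vcd\Gamma_0$. The stabilizer in $\Gamma_0$ of a cell $\sigma(S)$ permutes the finite set $S$, and its kernel on $S$ is trivial because $S$ spans $\R^n$, so the stabilizer is finite. Finally, an element of $\Gamma_0$ may stabilize a cell of $\widetilde{W}$ without fixing it pointwise, so one passes to the barycentric subdivision, on which a flag-stabilizing element fixes each barycenter and hence each simplex pointwise; the cell structure then descends to a finite cell complex on $W = \Gamma_0\bs\widetilde{W}$.

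The step I expect to be the real obstacle is making the deformation retraction rigorous: showing that the piecewise-defined flow patches together into a jointly continuous map near the non-convex locus $\widetilde{W}$ --- the delicate case being a small perturbation $q'$ of $q$ with strictly fewer minimal vectors, whose flow must first drive the nearly-minimal vectors down to length $1$ and thereafter shadow the flow of $q$ --- and verifying that every stage strictly enlarges $\mathrm{span}_\R M$ so that the process terminates. By contrast, once the cells are identified with the polytopes $\sigma(S)$, the regular-cell-complex claim is routine, and the compactness and finiteness statements follow from standard reduction theory via Mahler's criterion.
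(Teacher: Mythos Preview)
The paper does not prove this theorem; it is simply quoted from \cite[Thm.~2.11]{Ash84}. Your sketch is a faithful outline of Ash's original argument---the cells $\sigma(S)$, the retraction obtained by shrinking $q$ on the $q$-orthogonal complement of $\mathrm{span}_\R M(q)$, and the reduction-theoretic finiteness via Mahler's criterion---and your self-assessment at the end is accurate: joint continuity of the piecewise-defined flow near the strata is the one step requiring real care, while the remaining assertions are structural.
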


\begin{definition}
  $W = \widehat{W}/K$ is the \emph{well-rounded retract}.
\end{definition}

\subsection{A Family of Retracts}
The paper~\cite{MM20} extends Theorem~\ref{wrrmain} by adding an extra
dimension to~$Y$.  It starts with the trivial bundle $Y \times I$ over
an interval~$I$, where $G$ acts fiberwise on $Y \times I$.  There is a
corresponding bundle isomorphism $(Y\times I)/K \cong (Y/K)\times I$
with fibers $Y/K$.

In order to generalize the construction of Theorem~\ref{wrrmain} and
build a family of retracts, one needs the concept of a family of
weights.  The quotient $\mathbb{P}^{n-1}(\Q)/\Gamma$ is finite.  A
\emph{set of weights} for~$\Gamma$ is a function\footnote{There is no
  implicit assumption of continuity for~$\varphi$; the only assumption
  on~$\varphi$ is $\Gamma$-invariance.} $\varphi :
\mathbb{P}^{n-1}(\Q)/\Gamma \to \R_+$. Such a~$\varphi$ defines a
\emph{set of weights} for~$L_0$, also denoted~$\varphi$, by
$\varphi(x) = \varphi(\Q x)$. This is a $\Gamma$-invariant function
$L_0 - \{0\} \to \R_+$.  For $L = L_0 g$, a set of weights~$\varphi$
for~$L_0$ defines a \emph{set of weights} for~$L$, by $\varphi^L(xg) =
\varphi(x)$, with $\varphi^L : L - \{0\} \to \R_+$.

A \emph{one-parameter family of weights} for~$L_0$ is a map
$\varphi_\tau : (L_0 - \{0\}) \times I \to \R^+$ which is a
$\Gamma$-invariant set of weights for any given~$\tau$, and for which
$\varphi_\tau(x)$ is real analytic in~$\tau$ for any given~$x$.  We
normalize $\varphi_\tau$ by dividing through by a positive real scalar, which depends continuously
on~$\tau$, so that the maximum of $\varphi_\tau$ is~1 for all~$\tau$. A one-parameter family of weights $\varphi_\tau$ determines
$\varphi^L_\tau$ for $L=L_0 g$ by $\varphi^L_\tau(xg) =
\varphi_\tau(x)$.  As a function of~$\tau$, the arithmetic minimum is given by
$m(L) = \min \{ \varphi_\tau^L(x) \langle x, x\rangle \mid x \in L -
\{0\}\}$, with minimal vectors
\begin{equation} \label{defMwithtau}
  M(L) = \{x \in L \mid \varphi_\tau^L(x)\langle x, x\rangle = m(L) \}.
\end{equation}

The spaces $\widehat{W}_\tau$ and $W_\tau = \widehat{W}_\tau/K$ for
any given~$\tau$ are defined similarly.  By \cite[Thm.~2.11]{Ash84},
there is a strong deformation retraction $R_\tau(t)$ of the fiber
over~$\tau$ onto $W_\tau$.  In fact, more is true:

\begin{theorem}[\cite{MM20}] \label{wtccont}
  $R_\tau(t)$ is a continuous map $((Y\times I)/K) \times [0,1]
  \to (Y\times I)/K$.
\end{theorem}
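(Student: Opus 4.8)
The plan is to prove Theorem~\ref{wtccont} by carefully globalizing, over the full interval~$I$, the fiberwise deformation retraction that \cite[Thm.~2.11]{Ash84} supplies on each slice.  First I would recall the explicit form of Ash's retraction.  For a fixed temperament~$\tau$, Ash's deformation $R_\tau(t)$ is constructed by a gradient-like flow: one uses the set of minimal vectors $M(L)$ from \eqref{defMwithtau} to define, at each point of the fiber, a preferred direction in which the ``deficiency'' of the lattice (the failure of the minimal vectors to span) decreases, and one flows along this direction, stopping when the lattice becomes well rounded.  The key point is that every ingredient in this construction -- the weighted quadratic form $\varphi_\tau^L(x)\langle x,x\rangle$, the arithmetic minimum $m(L)$, the set $M(L)$, and the resulting vector field or barycenter-type map -- depends on~$\tau$ as well as on the point of~$Y/K$.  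So the real content is to track the $\tau$-dependence and verify joint continuity in $(\text{point},\tau,t)$ rather than continuity in $(\text{point},t)$ for each fixed~$\tau$.

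The main steps I would carry out are as follows.  (1) Lift everything to~$Y\times I$ where $G$ acts fiberwise, observe that $m$, $M$, and $\widehat W_\tau$ are $K$-invariant, so it suffices to build a $K$-invariant retraction upstairs and descend.  (2) Show that the weighted minimum $m(L,\tau) = \min\{\varphi_\tau^L(x)\langle x,x\rangle : x \in L-\{0\}\}$ is continuous in $(L,\tau)$; this follows because $\varphi_\tau(x)$ is real analytic (hence continuous) in~$\tau$, the normalization scalar depends continuously on~$\tau$, and the minimum is taken over a locally finite set of vectors whose $\langle x,x\rangle$-values tend to infinity, so only finitely many vectors are relevant near a given $(L,\tau)$ -- a standard Mahler-compactness / discreteness argument.  (3) From this deduce that the ``relevant'' vectors (those realizing the minimum, or close to it, within the threshold used in Ash's construction) form an upper-semicontinuous finite set in $(L,\tau)$, so that the vector field / retraction direction Ash defines varies continuously; here one has to be a little careful at the loci where $M(L)$ jumps (these are exactly the critical temperaments and the cell walls), and I would handle those by noting that Ash's flow is defined using a max or barycenter over the relevant vectors, operations which are continuous even where the underlying finite set changes.  (4) Check that the stopping time -- the first~$t$ at which the flow reaches~$\widehat W_\tau$ -- is continuous in $(L,\tau)$, again using that $\widehat W_\tau$ is cut out by the condition that the relevant vectors span and that this is a closed condition varying continuously.  (5) Reparametrize so the stopping time is uniformly~$1$, and descend the resulting map through the quotient by~$K$ to obtain $R_\tau(t) : ((Y\times I)/K)\times[0,1] \to (Y\times I)/K$.

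The hardest step will be (3)--(4): establishing joint continuity across the critical temperaments~$\tau^{(i)}$, where the cell structure of the fibers changes abruptly and $M(L)$ is genuinely discontinuous as a set-valued map.  The resolution is that although the \emph{set} $M(L)$ jumps, the geometric data Ash extracts from it (a barycenter, or the solution of a small optimization problem over $M(L)$) is continuous: at a critical~$\tau$ the newly-appearing minimal vectors enter with weight going continuously from ``not minimal'' to ``minimal,'' so the barycentric combination varies continuously through the transition.  I would make this precise by writing Ash's retraction not in terms of~$M(L)$ directly but in terms of a smoothed/robust version (e.g.\ an $\varepsilon$-approximate minimal set, or the full spectrum of values $\varphi_\tau^L(x)\langle x,x\rangle$ with their multiplicities), observe that \cite{MM20} has in fact already chosen the family of weights so that this works, and invoke real analyticity of $\varphi_\tau$ to control the behavior near each transition.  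Everything else is a routine but somewhat lengthy unwinding of the definitions, which I would relegate to citing the relevant lemmas of~\cite{Ash84} and~\cite{MM20}.
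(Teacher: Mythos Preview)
The present paper does not prove Theorem~\ref{wtccont}; the statement is quoted from~\cite{MM20}, with the bracketed citation standing in for the proof. Section~\ref{sec:wtcz} is explicitly a summary of results from that reference, so there is no argument in this paper against which your proposal can be compared.

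As a reconstruction of what the proof in~\cite{MM20} must contain, your outline is broadly reasonable, and you correctly isolate joint continuity across the critical temperaments as the crux. One caution on step~(3): you assert that a ``max or barycenter over the relevant vectors'' remains continuous even where the finite set $M(L)$ jumps, but an unweighted barycenter of a finite set that gains or loses a point is \emph{not} continuous in general. Ash's retraction is not built from a barycenter of minimal vectors; it rescales the orthogonal complement of their span, and its continuity rests on the upper-semicontinuity of $\mathrm{span}\,M(L)$ together with the fact that, at a jump, the scaling in the newly-acquired directions starts from the identity. Your alternative of passing to a smoothed $\varepsilon$-approximate minimal set is a different device from what Ash actually uses. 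If you flesh this out, make steps~(3)--(4) precise in terms of Ash's actual scaling construction rather than the barycenter heuristic, and then verify that the argument survives the insertion of the $\tau$-dependent weights~$\varphi_\tau$.
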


\begin{corollary}
  $\{(w \times \tau)/K \mid \tau \in I, w \in \widehat{W}_\tau\}$ is a
  strong deformation retract of $(Y \times I)/K$.  It has dimension
  $\vcd \Gamma$.  It is compact if~$I$ is compact.  The map from the
  retract to~$I$ is a fibration.
\end{corollary}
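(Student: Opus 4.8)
The plan is to read all four assertions off Theorems~\ref{wrrmain} and~\ref{wtccont}, the only genuinely geometric ingredient being a single compactness estimate. Write $\mathcal{R}=\{(w\times\tau)/K \mid \tau\in I,\ w\in\widehat{W}_\tau\}$ for the claimed retract and let $\pi\colon (Y\times I)/K\cong (Y/K)\times I\to I$ be the projection onto the temperament. The starting observation is that each $R_\tau(t)$ moves points only within the fibre over~$\tau$, so Theorem~\ref{wtccont} is precisely the statement that $H\colon(p,t)\mapsto R_{\pi(p)}(t)(p)$ is a continuous homotopy of $(Y\times I)/K$ satisfying $\pi\circ H(\cdot,t)=\pi$ for every~$t$. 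Feeding the weight $\varphi_\tau$ into Theorem~\ref{wrrmain} in the fibre over~$\tau$ then shows that $H(\cdot,0)$ is the identity, that $H(\cdot,1)$ carries that fibre onto $W_\tau$---so the image of $r:=H(\cdot,1)$ is exactly $\bigcup_{\tau\in I}W_\tau\times\{\tau\}=\mathcal{R}$---and that $H$ fixes each point of $W_\tau$ for all~$t$ and hence fixes $\mathcal{R}$ pointwise. Thus $H$ exhibits $\mathcal{R}$ as a strong deformation retract of $(Y\times I)/K$.

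For the fibration statement I would use that $\pi\colon(Y/K)\times I\to I$ is a trivial bundle, hence a Hurewicz fibration, and that $r$ is a retraction onto $\mathcal{R}$ preserving the fibres of $\pi$. A fibre-preserving retract of a Hurewicz fibration is again a Hurewicz fibration over the same base: given $F\colon Z\times[0,1]\to I$ and a lift $f_0\colon Z\to\mathcal{R}$ of $F(\cdot,0)$, lift $F$ through $\pi$ to some $\widetilde{F}\colon Z\times[0,1]\to(Y/K)\times I$ with $\widetilde{F}(\cdot,0)=f_0$, and then $r\circ\widetilde{F}$ is the required lift, since it takes values in $\mathcal{R}$, agrees with $f_0$ at time~$0$ (as $r$ is the identity on $\mathcal{R}$), and covers $F$ (as $\pi\circ r=\pi$). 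The dimension then follows from the bound on $\dim W_\tau$ furnished by the cell structure in Theorem~\ref{wrrmain}, combined with the fact that $\mathcal{R}$ fibres over the one-dimensional interval~$I$.

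For compactness when $I$ is compact I would observe that $\mathcal{R}$, being a retract of the Hausdorff space $(Y/K)\times I$, is closed in it, so it is enough to show that $\bigcup_{\tau\in I}W_\tau$ lies in a fixed compact subset of $Y/K$---then $\mathcal{R}$ is a closed subset of the compact set $\bigl(\bigcup_{\tau\in I}W_\tau\bigr)\times I$. This is the one step that is not formal: it is the compactness argument for the ordinary well-rounded retract in~\cite{Ash84}, run uniformly over the compact interval~$I$, to the effect that a family of $\varphi_\tau$-well-rounded lattices of arithmetic minimum~$1$, with $\tau$ ranging over~$I$ and the weights normalized so that $\max\varphi_\tau\equiv 1$, stays in a fixed compact part of $Y/K$; here the real-analyticity of $\varphi_\tau$ in~$\tau$ and the continuity of the normalizing scalar are used to make the estimate uniform. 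I expect that uniform non-degeneracy to be the only place where genuine analysis is needed: once Theorem~\ref{wtccont} is granted, the deformation-retraction and fibration claims are purely formal and the dimension is bookkeeping.
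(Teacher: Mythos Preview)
The paper does not supply a proof of this Corollary; it is stated immediately after Theorem~\ref{wtccont} and left as a direct consequence of that theorem together with the fibrewise statement of Theorem~\ref{wrrmain} (both of which are in turn cited from~\cite{MM20} and~\cite{Ash84}). There is thus no competing argument to compare against, and your proof correctly supplies the omitted details. The deformation-retraction and fibration claims are indeed formal once one has the continuous, fibre-preserving retraction from Theorem~\ref{wtccont}; your lifting argument for the Hurewicz property is the standard one, and your compactness sketch---a uniform-in-$\tau$ Mahler-type bound using the normalized weights over a compact parameter interval---is the expected ingredient.

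One remark on the dimension clause. As you phrase it (``bound on $\dim W_\tau$ \dots\ combined with the fact that $\mathcal{R}$ fibres over the one-dimensional interval~$I$''), your argument yields $\dim\mathcal{R}=\vcd\Gamma+1$, not $\vcd\Gamma$ as the Corollary literally states. This agrees with the Introduction, where the well-tempered complex $\widetilde{W}^+$---a special case of the present construction---is said to have dimension $\vcd(\Gamma_{\SL})+1$, and with the later remark that $\widetilde{W}_{[\tau^{(i-1)},\tau^{(i)}]}$ ``has dimension one higher than the vcd''. So the ``$\vcd\Gamma$'' in the Corollary appears to be a slip for ``$\vcd\Gamma+1$'', and your reasoning is the correct one; you may wish to flag this rather than try to force your argument to match the stated bound.
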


\subsection{Hecke Correspondences}

We review Hecke correspondences for~$\GL_n$, following \cite[\S3.1 and
  p.~76]{Sh}.
Define $\Delta = \{a \in G \mid L_0 a \subseteq L_0\}$.  Then
$\Gamma_0 \subset \Delta$, and~$\Delta$ is the sub-semigroup of
$\GL_n(\Q)$ with integer entries.  The arithmetic group $\Gamma =
\Gamma_0 \cap a^{-1} \Gamma_0 a$ is the common stabilizer in~$G$ of
$L_0$ and its sublattice $M_0 = L_0 a$.  One calls $(\Gamma_0,
\Delta)$ a \emph{Hecke pair}.

A point in $\Gamma_0\bs X$ has the form $\Gamma_0 g K$ with $g\in G$.
Define two maps
\begin{equation} \label{twomaps}
\begin{tikzcd}
    \Gamma\bs X \arrow[d, bend left, "q"] \arrow[d, bend right, "p"'] \\
    \Gamma_0\bs X
\end{tikzcd}
\end{equation}
by $p : \Gamma g K \mapsto \Gamma_0 g K$ and $q : \Gamma g K \mapsto
\Gamma_0 a g K$.  The \emph{Hecke correspondence} $T_a$ is the
one-to-many map $\Gamma_0\bs X \to \Gamma_0\bs X$ given by
\[
T_a = q \circ p^{-1}.
\]
It sends one point of $\Gamma_0\bs X$ to $[\Gamma_0 : \Gamma]$ points
of $\Gamma_0\bs X$, counting multiplicities.

The \emph{Hecke algebra} for the Hecke pair $(\Gamma_0, \Delta)$ is
the free abelian group on the set of correspondences~$T_a$ for
$a\in\Delta$, with multiplication defined by the composition of
correspondences.  This is equivalent to the traditional definition as
the algebra of double cosets $\Gamma_0 a\Gamma_0$ for $a\in\Delta$
\cite[p.~54]{Sh}.

For a prime $\ell\in\Z$ and for $k\in\{1,\dots,n\}$, define
\[
T_{\ell,k} = T_a \quad\mathrm{for\ } a =
\mathrm{diag}(\underbrace{1,\dots,1}_{n-k\mathrm{\ times}},
\underbrace{\ell,\dots,\ell}_{k\mathrm{\ times}}).
\]
The Hecke algebra is generated by the $T_{\ell,k}$ for all
primes~$\ell$ and $k\in\{1,\dots,n\}$.  If instead $G = \SL_n(\R)$ and
$\Gamma_0 = \SL_n(\Z)$, then $\Delta$ is the semigroup with entries
in~$\Z$ and positive determinant, and the Hecke algebra is generated
by the same $T_{\ell,k}$ \cite[\S3.2]{Sh}.

\begin{figure}
  \begin{center}
    \includegraphics[scale=0.3]{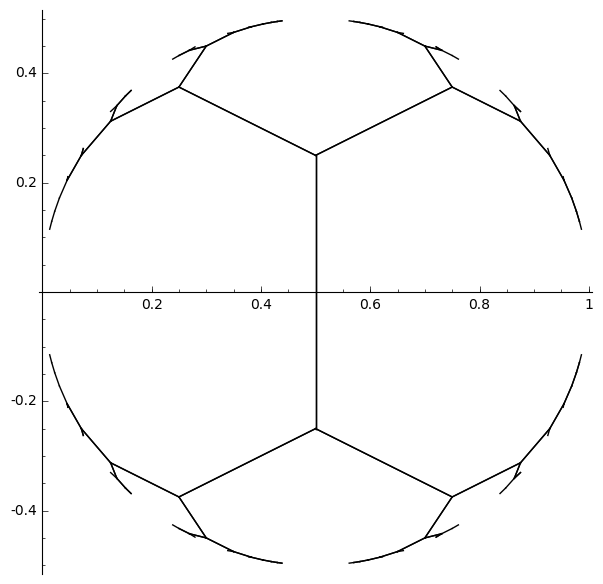} $\quad$
    \includegraphics[scale=0.3]{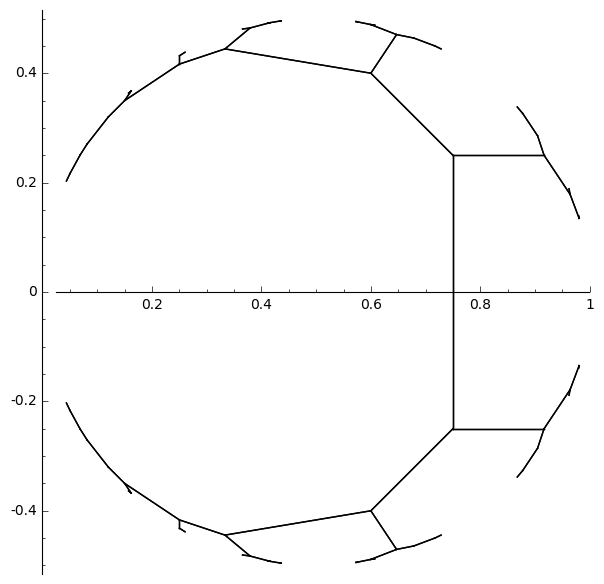}
  \caption{The well-rounded retract for $\GL_2(\Z)$, and its translate
    by $T_2$.}
  \label{fig:wrr2T2}
  \end{center}
\end{figure}

\subsubsection{Example for $n=2$} \label{sec:exWrr2T2}
In the Figures, we will present a running example for $\Gamma_0 =
\GL_2(\Z)$.  The left-hand side of Figure~\ref{fig:wrr2T2} shows the
complex~$\widetilde{W}$ for $\GL_2(\Z)$.  Here~$X$ is the unit disc,
which is the Klein model of the symmetric space.  $\widetilde{W}$ is a
tree.  $\Gamma_0$ acts on the tree, acting transitively on both the
vertices and the edges.

The right-hand side of Figure~\ref{fig:wrr2T2} shows the image
of~$\widetilde{W}$ under~$T_2 = T_{2,1}$.  It is a tree, and~$\Gamma$
is the largest subgroup of~$\Gamma_0$ that acts on it.  To
compute~$T_2$, we will build a one-parameter family of trees that
interpolates between the two sides of Figure~\ref{fig:wrr2T2} in a
$\Gamma$-equivariant way.  In the next section, we explain how to use
Theorem~\ref{wtccont} to build the family.
Figure~\ref{fig:wrr2T2near2} will show some members of the family.

\subsection{The Well-Tempered Complex}

Our choice of~$L_0$ determined the well-rounded retract
for~$\Gamma_0$.  Now fix $a\in\Delta$, and let $\Gamma = \Gamma_0 \cap
a^{-1} \Gamma_0 a$ as before.  The
well-tempered complex~$W^+$ will be determined by both $L_0$ and~$a$, and
will naturally admit an action by~$\Gamma$. 

Let $M_0 = L_0 a$.  By a standard calculation based on how~$M_0$
and~$\Gamma$ are defined in terms of~$a$, the next definition gives a
set of weights~$\varphi_\tau$ for~$\Gamma$.  We use this particular
set of weights for the rest of the paper.

\begin{definition} \label{deftausq}
  For $x \in L_0 - \{0\}$ and $\tau \geqslant 1$, define
\[
\varphi_\tau(x) = \left\{
\begin{array}{cl}
  \varphi(x) & \quad\mathrm{if\ }x \in M_0 - \{0\}, \\
  \tau^2\varphi(x) & \quad\mathrm{if\ }x \notin M_0.
\end{array} \right.
\]
\end{definition}

\begin{remark}
The idea here comes from $m(L)$ in Definition~\ref{arithmin}.  The
weighted squared length of a vector $x \in L$ is $\varphi^L(x) \langle
x, x \rangle$.  The squared length $\langle x, x \rangle$ scales by
$c^2$ when we multiply~$x$ by $c\in\R$.  By multiplying the weight
by~$\tau^2$ when $x \notin M_0$, we mimic the effect of scaling the
length of~$x$ linearly by~$\tau$.  We pretend $x\notin M_0$ gets
``longer by lies'', linearly.  When $x\in M_0$, we do not pretend to
change the length.
\end{remark}

Choose $\tau_0 > 1$, and let $I = [1, \tau_0]$.  The well-tempered
complex
depends on~$\tau_0$, but~\cite{MM20} shows that the complexes for two
different~$\tau_0$ are isomorphic when~$\tau_0$ is sufficiently large.

\begin{definition}
  The \emph{well-tempered complex}~$W^+$ for~$L_0$, $\varphi$, and~$a$
  is the image of $(Y \times [1,\tau_0])/K$ under the retraction
  $R_\tau(t)$ of Theorem~\ref{wtccont}, where~$\varphi_\tau$ is as in
  Definition~\ref{deftausq}.
\end{definition}

\begin{theorem}[{\cite[Thm.~4.33]{MM20}}] \label{wtccell}
  The universal cover~$\widetilde{W}^+$ of the well-tempered complex~$W^+$
  is a locally finite regular cell complex on which $\Gamma$ acts
  cell-wise with finite stabilizers of cells.  This cell structure has
  a natural barycentric subdivision which descends to a finite cell
  complex structure on~$W^+$.
\end{theorem}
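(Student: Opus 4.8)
The argument will run parallel to Ash's proof of Theorem~\ref{wrrmain}, carried out fiberwise over the temperament interval $I=[1,\tau_0]$ and glued together using the continuity statement of Theorem~\ref{wtccont}. First I would recall, for each fixed $\tau$, the regular cell structure of Theorem~\ref{wrrmain} on the fiber $\widetilde{W}_\tau$: its cells $\sigma_S(\tau)$ are indexed by the finite symmetric subsets $S\subset L_0-\{0\}$ that arise as $M(L)$ in \eqref{defMwithtau} for lattices $L$ in the fiber over $\tau$, with $\sigma_S(\tau)$ the corresponding closed cell in $X$, and the dimension of $\sigma_S(\tau)$ together with the face incidences among the $\sigma_S(\tau)$ are determined, as in Theorem~\ref{wrrmain}, by the linear algebra of the rank-one forms $\{x^{t}x : x\in S\}$. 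The key point to extract is that admissibility of a given $S$, its dimension, and the incidences are all governed by the sign behavior of finitely many functions of $(L,\tau)$; and because Definition~\ref{deftausq} makes $\varphi_\tau(x)$ an honest polynomial in $\tau$ (of degree $0$ or $2$), while the normalization only rescales $m(L)$ and hence leaves unchanged which vectors realize the minimum, these functions are polynomial in $\tau$.

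Second, I would show that the combinatorial type of $\widetilde{W}_\tau$ is constant on the complement in $I$ of a finite set $1<\tau^{(1)}<\dots<\tau^{(r)}<\tau_0$ of \emph{critical temperaments}. This is the linchpin of the whole argument. There are only finitely many $\Gamma$-orbits of candidate sets $S$ (reduction theory, exactly as in Ash); each transition event --- an admissible $S$ ceasing to be admissible, a vector joining or leaving a minimal-vector set, an incidence appearing or disappearing --- is detected by the vanishing of one of the polynomial functions of $\tau$ above; and such a polynomial is either identically zero or has finitely many roots in $I$. Hence there are finitely many critical temperaments, and on each closed subinterval $J$ they cut out of $I$ (including the two terminal subintervals containing $1$ and $\tau_0$) the poset of admissible $S$ is independent of $\tau$ in the interior of $J$.

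Third, I would assemble the cells of $\widetilde{W}^+$. Over a subinterval $J$ between consecutive critical temperaments, for each $S$ admissible on the interior of $J$ set $\widehat{\sigma}_{S,J}=\bigcup_{\tau\in J}(\sigma_S(\tau)\times\{\tau\})$; since $\sigma_S(\tau)$ varies real-analytically and without combinatorial change over $J$, this ``prism'' is a closed ball of dimension $\dim\sigma_S(\tau)+1$ whose interior embeds in $\widetilde{W}^+$. Over each critical temperament, and over each endpoint of $I$, the fiber $\widetilde{W}_{\tau^{(i)}}$ already carries the regular cell structure of Theorem~\ref{wrrmain}, and its cells are taken to be the ``horizontal'' cells of $\widetilde{W}^+$. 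I would then verify the regular-CW axioms for this collection: $\partial\widehat{\sigma}_{S,J}$ is the union of the prisms $\widehat{\sigma}_{S',J}$ over the proper faces $\sigma_{S'}$ of $\sigma_S$ together with the two end faces $\sigma_S(\tau)\times\{\tau\}$ for $\tau$ an endpoint of $J$, each end face being a union of horizontal cells of that fiber; the intersection of any two cells is a common face; and local finiteness follows from the local finiteness of each fiber (Theorem~\ref{wrrmain}), the finiteness of the set of critical temperaments, and the compactness of $I$. Equivariance and finiteness of stabilizers are then formal: the data defining the cells is $\Gamma$-invariant because $\langle\,,\,\rangle$ and $\varphi_\tau$ are, and the $\Gamma$-stabilizer of a prism $\widehat{\sigma}_{S,J}$ stabilizes $S$, hence lies inside a cell stabilizer of some $\widetilde{W}_\tau$, which is finite by Theorem~\ref{wrrmain}. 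Finally, passing to the first barycentric subdivision, exactly as in the last sentence of Theorem~\ref{wrrmain}, makes the cellular $\Gamma$-action compatible with the ramified quotient map $\widetilde{W}^+\to W^+$, so the subdivided structure descends to $W^+$; its finiteness is the finiteness of the number of $\Gamma$-orbits of cells, which follows from the finiteness of the number of orbits of admissible $S$ together with the finiteness of the number of subintervals $J$.

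The step I expect to be the main obstacle is the matching across a critical temperament: showing that the change in fiber combinatorics from just below $\tau^{(i)}$ to just above it is exactly absorbed by the end faces of the two families of prisms meeting there, so that $\widetilde{W}^+$ is a genuine regular cell complex at $\tau^{(i)}$ --- every closed cell a ball, attached along a subcomplex --- rather than pinched or otherwise singular, and so that the structure on the degenerate fiber $\widetilde{W}_{\tau^{(i)}}$ is fine enough to serve as the common refinement of the two neighboring structures. The natural tool is a triviality theorem for families: intersecting with the (locally) semialgebraic conditions that define well-roundedness exhibits $\widetilde{W}^+\to I$ as a locally semialgebraic family over a one-dimensional base, so Hardt's theorem makes it locally trivial away from finitely many values, and at each exceptional value one needs a controlled local normal form for the bifurcation together with a check that it yields a regular --- not merely a CW or $\Delta$- --- complex. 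Pinning down that normal form is the delicate part; everything else is Theorem~\ref{wrrmain} applied fiber by fiber.
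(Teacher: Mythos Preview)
The paper does not give its own proof of this theorem: it is stated as a citation of \cite[Thm.~4.33]{MM20} and no argument is reproduced here. There is therefore nothing in the present paper to compare your proof against line by line. That said, the paragraph immediately following the theorem summarizes what the proof in \cite{MM20} establishes---finitely many critical temperaments $1=\tau^{(0)}<\cdots<\tau^{(r)}=\tau_0$, with cells of $\widetilde{W}^+$ indexed by pairs $(M,[\tau,\tau'])$ obtained by slicing along the hyperplanes $\tau=\tau^{(i)}$---and your sketch is exactly in this spirit: fiberwise Ash retracts, polynomial dependence of the minimal-vector conditions on~$\tau$ via Definition~\ref{deftausq}, hence finitely many critical values, prism cells over the subintervals, and horizontal cells at the critical fibers. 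So your outline is consistent with what the paper reports about the cited proof.

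One point of divergence worth flagging: you reach for Hardt's semialgebraic triviality theorem to handle local triviality and the bifurcation analysis at critical temperaments. The paper instead points (in the Hecketope subsection) to a dual-polytope description: the cells of $\widetilde{W}^+$ are dual to faces of a generalized Voronoi polyhedron, and \cite[\S6]{MM20} uses that polyhedral picture to enumerate cells, critical temperaments, and incidence data directly. Your approach is more analytic/model-theoretic, theirs more convex-geometric; both should work, but the Hecketope route makes the regularity at critical temperaments---the step you correctly identify as the delicate one---fall out of standard polytope face-lattice facts rather than requiring a local normal form for a semialgebraic family.
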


\begin{figure}
  \begin{center}
    \includegraphics[scale=0.2]{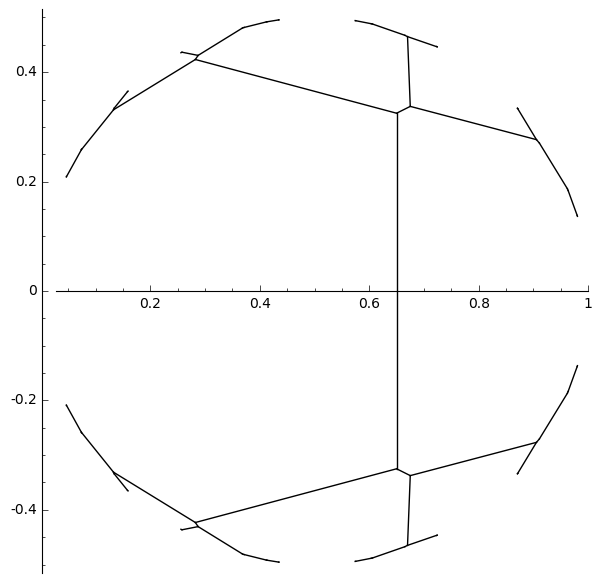}
    \includegraphics[scale=0.2]{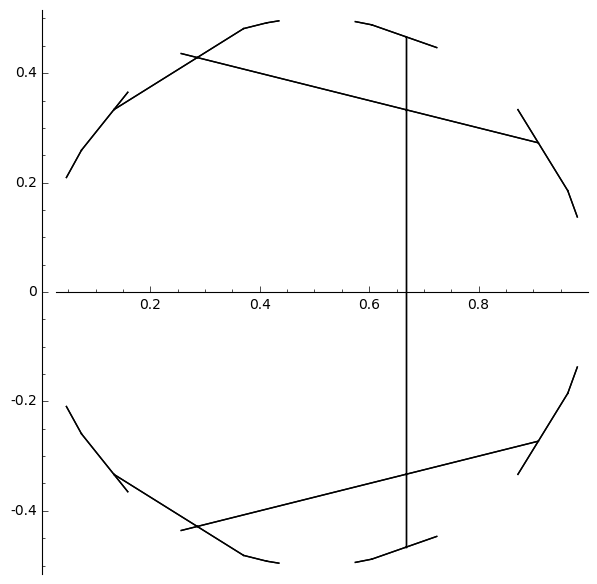}
    \includegraphics[scale=0.2]{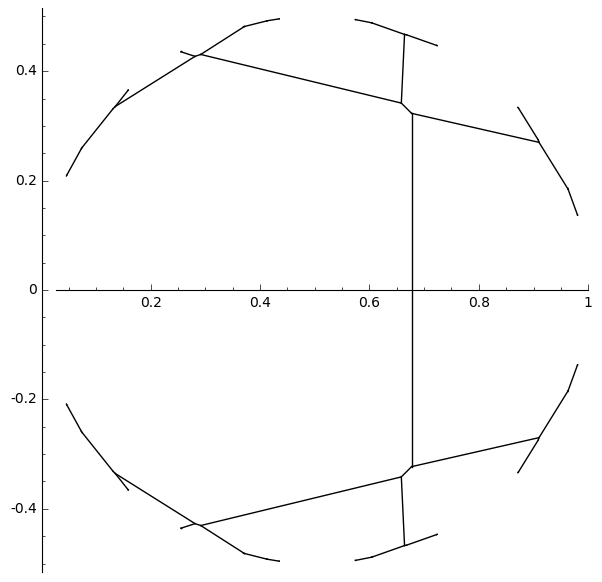}
  \end{center}
  \caption{How the fiber continuously deforms in the well-tempered complex.}
  \label{fig:wrr2T2near2}
\end{figure}

In the original well-rounded retract $\widetilde{W}$, the cells are
indexed by their sets of minimal vectors~$M$, each of which is a finite subset of $L_0-\{0\}$.  In the well-tempered complex, cells are
indexed by pairs consisting of sets~$M$ and a set of temperaments.
The proof of Theorem~\ref{wtccell} in~\cite{MM20} shows that there are
a finite number of \emph{critical temperaments} $\tau^{(i)}$ with $1 =
\tau^{(0)} < \tau^{(1)} < \cdots < \tau^{(r)} = \tau_0$.  The
cells~$\sigma$ of Theorem~\ref{wtccell} are cut into closed pieces
along the hyperplanes $\tau = \tau^{(i)}$ for $i = 0, \dots, r$.  Each
non-empty cell of the refinement is indexed by a pair.  The pair is
$(M, [\tau^{(i-1)}, \tau^{(i)}])$ if the projection of the cell to the
$\tau$-coordinate is $[\tau^{(i-1)}, \tau^{(i)}]$.  The pair is $(M,
[\tau^{(i)}, \tau^{(i)}])$ if the projection is $\{\tau^{(i)}\}$.  We
will write $[\tau, \tau']$ as shorthand for both $[\tau^{(i-1)},
  \tau^{(i)}]$ and $[\tau^{(i)}, \tau^{(i)}]$.

\subsubsection{Example for $n=2$}

We continue the example from Section~\ref{sec:exWrr2T2} for~$T_2$ for
$\Gamma_0 = \GL_2(\Z)$.  The critical temperaments turn out to be
$\tau^{(i)} = 1, 2, 4$.  The well-tempered complex $\widetilde{W}^+$
has dimension~2.  Figure~\ref{fig:wrr2T2} showed the slices of
$\widetilde{W}^+$ at~$\tau = 1$ and~$4$.  Figure~\ref{fig:wrr2T2near2}
shows the slices at $\tau = 2-\varepsilon$, $2$, and $2+\varepsilon$
for a small $\varepsilon>0$.  It illustrates how the cell structure
changes at $\tau=2$.


\subsubsection{Hecketopes}
Voronoi’s reduction theory~\cite{Vor} gives a way to make the
well-rounded retract~$\widetilde{W}$.  The Voronoi cones of~\cite{Vor}
are the cones over the faces of a Voronoi polyhedron.  The cells
of~$\widetilde{W}$ are unions of cells in a certain subdivision of the
Voronoi cones, and, in fact, the cells of~$\widetilde{W}$ are dual to
the faces of the Voronoi polyhedron.  In the same way, the
well-tempered cells of $\widetilde{W}^+$ are dual to a generalization
of the Voronoi polyhedron called the \emph{Hecketope}.  Section~6
of~\cite{MM20} describes the Hecketope in full, presenting practical
algorithms for finding the cells of~$\widetilde{W}^+$ along with the
critical temperaments and the indexing data $(M, [\tau, \tau'])$.

\subsubsection{The first and last temperament}
For the~$a$ giving the Hecke operator $T_{\ell,k}$,~\cite{MM20} sets
$\tau_0 = \ell$ and shows there is then a simple relationship between
the fibers of the well-tempered complex over~$\tau_0$ and over~$1$:

\begin{theorem}[\cite{MM20}] \label{wtctau0}
  For any $\tau \geqslant \tau_0$, the map $X \to X$ given by $gK
  \mapsto a^{-1} gK$ descends mod~$\Gamma$ to give a cell-preserving
  homeomorphism from the well-rounded retract $W_1$ over~$1$ to the
  well-rounded retract $W_\tau$ over~$\tau$.  If a cell over $\tau =
  1$ is $\sigma_1(Q)$ with index set $Q \subset L_0 - \{0\}$, then the
  cell that corresponds to $\sigma_1(Q)$ under the homeomorphism has
  index set $Qa$.
\end{theorem}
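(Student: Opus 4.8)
The plan is to run the whole argument on the symmetric space~$X$, reducing it to a single comparison of minimal-vector sets, with the passage to $\Gamma$-quotients a routine last step. The map $\psi\colon X\to X$, $gK\mapsto a^{-1}gK$, is left translation by $a^{-1}\in\GL_n(\R)$, hence a homeomorphism of~$X$; in lattice terms it replaces $L:=L_0g$ by the overlattice $L':=L_0a^{-1}g$, which contains~$L$ because $M_0=L_0a\subseteq L_0$ gives $L_0\subseteq L_0a^{-1}$. Once we show that $\psi$ restricts to a cell-preserving homeomorphism $\widetilde W_1\to\widetilde W_\tau$ sending the cell with index set~$Q$ to the cell with index set~$Qa$, the corresponding statement for $W_1$ and $W_\tau$ follows by passing to $\Gamma$-quotients, using the defining property $\Gamma=\Gamma_0\cap a^{-1}\Gamma_0a$.

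The heart of the proof is a comparison lemma: for $\tau\geqslant\tau_0=\ell$ and every~$g$ one has $m_\tau(L')=m_1(L)$, and $M_\tau(L')=M_1(L)$ as subsets of~$\R^n$; equivalently the $L_0$-index sets transform by $Q\mapsto Qa$. To prove it, parametrize~$L'$ by $L_0$ via $x\mapsto xa^{-1}g$ and write $x=ua$ with $u\in L_0a^{-1}$, so $xa^{-1}g=ug$ and the weighted squared length of this vector at temperament~$\tau$ is $\varphi_\tau(ua)\langle ug,ug\rangle$. By Definition~\ref{deftausq} the weight is $\varphi(ua)$ when $u\in L_0$ (so $ua\in M_0$) and $\tau^2\varphi(ua)$ when $u\in L_0a^{-1}\smallsetminus L_0$; since the weight~$\varphi$ entering a Hecke computation is $a$-invariant, $\varphi(ua)=\varphi(u)$, so the contribution of the $u\in L_0$ reproduces exactly the temperament-$1$ length function on~$L$, and in particular every minimal vector of~$L$ at temperament~$1$ is a vector of~$L'$ of the same weighted length at temperament~$\tau$. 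The point is that nothing shorter appears: because $\ell a^{-1}$ has integer entries we have $\ell(L_0a^{-1})\subseteq L_0$, so for $u\in L_0a^{-1}\smallsetminus L_0$ the vector $\ell ug$ lies in $L\smallsetminus\{0\}$, whence
\[
\tau^2\varphi(u)\langle ug,ug\rangle=(\tau/\ell)^2\varphi(u)\langle\ell ug,\ell ug\rangle\geqslant(\tau/\ell)^2m_1(L)\geqslant m_1(L)
\]
for $\tau\geqslant\ell$, and strictly when $\tau>\ell$. Hence $m_\tau(L')=m_1(L)$, attained on~$L'$ only on the vectors of~$L$ that attain it there, so $L'$ is well rounded at temperament~$\tau$ precisely when $L$ is well rounded at temperament~$1$, with matching minimal-vector sets.

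Granting the lemma, the rest is formal. Applying it for all~$g$ gives $\psi(\widetilde W_1)\subseteq\widetilde W_\tau$, and applying it to $\psi^{-1}$ (translation by~$a$, which carries $L_0a^{-1}g$ back to $L_0g$) gives the reverse inclusion, so $\psi$ is a bijection $\widetilde W_1\to\widetilde W_\tau$; a cell $\sigma_1(Q)$ with $Q=M_1(L)g^{-1}$ goes to the cell with index set $M_\tau(L')\,g^{-1}a=M_1(L)g^{-1}a=Qa$; and $\psi$, being the restriction of a homeomorphism of~$X$, preserves dimensions and the face relation in the cell complexes $\widetilde W_1$ and~$\widetilde W_\tau$. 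Quotienting by~$\Gamma$ then yields the cell-preserving homeomorphism $W_1\to W_\tau$ with index sets $Q\mapsto Qa$.

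I expect the comparison lemma to be the main obstacle, and within it the step ruling out spurious short vectors in the overlattice~$L'$, which is exactly where the hypotheses $\tau\geqslant\ell$ and $\ell(L_0a^{-1})\subseteq L_0$ are consumed. At the endpoint $\tau=\tau_0=\ell$ the displayed inequality degenerates to equality, and a priori $L'$ picks up extra minimal vectors of the critical length --- the $\tfrac1\ell w$ for those $w\in M_1(L)$ whose $L_0$-pullback has its first $n-k$ coordinates divisible by~$\ell$; showing that these do not disturb the cell bijection at the last temperament (by taking $\tau_0$ sufficiently large, or by matching them with the corresponding special cells of the fiber over $\tau=1$) is the delicate part of a complete argument.
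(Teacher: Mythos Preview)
The paper does not prove this statement. Theorem~\ref{wtctau0} is quoted from~\cite{MM20} as part of the summary in Section~\ref{sec:wtcz}, and no argument is given here; there is therefore no in-paper proof to compare your attempt against.

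On its own merits, your plan is the natural one and is presumably what~\cite{MM20} does: realize the map as left translation by~$a^{-1}$, then compare the temperament-$\tau$ weighted length function on the overlattice $L'=L_0a^{-1}g$ with the temperament-$1$ length function on $L=L_0g$. Your comparison lemma is the right engine, and the displayed inequality $\tau^2\langle ug,ug\rangle=(\tau/\ell)^2\langle \ell ug,\ell ug\rangle\geqslant(\tau/\ell)^2 m_1(L)$ pinpoints exactly where the hypothesis $\tau\geqslant\tau_0$ and the integrality of $\ell a^{-1}$ are consumed.

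Two places deserve more than you give them. First, the passage to $\Gamma$-quotients is not routine in the way you suggest: left translation by~$a^{-1}$ intertwines the left $\Gamma$-action on the source with the $a^{-1}\Gamma a$-action on the target, and $a^{-1}\Gamma a$ is not contained in~$\Gamma$ (indeed not even in~$\Gamma_0$) in general, so the phrase ``descends mod~$\Gamma$'' needs to be unpacked carefully---the way the map is actually used in Section~\ref{sec:hos} is to identify $H^*_\Gamma(\widetilde W_{\tau_0})$ with $H^*_{\Gamma_0}(\widetilde W_1)$ via multiplication by~$a$, which is a different equivariance statement. Second, your own caveat at $\tau=\tau_0$ is a genuine gap, not a footnote: the inequality degenerates, extra minimal vectors of the form $\tfrac{1}{\ell}w$ can appear, and the theorem as stated asserts the index set is exactly~$Qa$. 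Resolving this---whether by showing the extra vectors are redundant for the cell decomposition, or by invoking the stabilization of the complex for $\tau_0$ sufficiently large---is part of the proof, not an afterthought.
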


We call the endpoints of $[1, \tau_0]$ the \emph{first} and
\emph{last} temperaments, respectively.

\subsection{Computing Hecke Operators}
\label{sec:hos}

Let the Hecke pair $(\Gamma_0, \Delta)$ be as above.  Let~$\rho$ be
any left $\Z\Delta$-module.  (We often take the tensor product
of~$\rho$ with a field like $\Q$ or $\mathbb{F}_p$.)  There is a
natural left action of the Hecke algebra on the equivariant cohomology
$H^*_{\Gamma_0}(X; \rho)$ \cite[\S1.1]{AS}.  For $a\in\Delta$, the
action of the Hecke correspondence~$T_a$ on the cohomology is called
the \emph{Hecke operator} associated to $a$, and it will also be
denoted~$T_a$.  It is defined to be $p_* q^*$ in a diagram
derived from~\eqref{twomaps}:
\begin{equation} \label{twomapsequivar}
\begin{tikzcd}
    H^*_{\Gamma}(X; \rho) \arrow[d, bend right, "p_\ast"'] \\
    H^*_{\Gamma_0}(X; \rho) \arrow[u, bend right, "q^\ast"'] 
\end{tikzcd}
\end{equation}
The map $q^* : H^*_{\Gamma_0}(X; \rho) \to H^*_{\Gamma}(X; \rho)$ is
the natural pullback map for~$q$. The map $p_* : H^*_{\Gamma}(X; \rho)
\to H^*_{\Gamma_0}(X; \rho)$ is the transfer map \cite[III.9]{Br}
for~$p$, which is defined because $\Gamma = \Gamma_0 \cap a^{-1}
\Gamma_0 a$ has finite index in~$\Gamma_0$.

We now give an algorithm that uses the well-tempered complex to
compute~$T_a$. To compute equivariant cohomology, we may use any
acyclic cell complex on which~$\Gamma_0$ acts.  The fiber
$\widetilde{W}_\tau$ of the well-tempered complex $\widetilde{W}^+$
over any~$\tau$ is a strong deformation retract of~$X$, hence acyclic.
This holds in particular for the fibers $\widetilde{W}_{\tau^{(i)}}$
over the critical temperaments $\tau^{(i)}$, and for the inverse image
of the closed interval between two consecutive critical
temperaments. Indeed, $\widetilde{W}_{[\tau^{(i-1)}, \tau^{(i)}]}$ has
dimension one higher than the vcd, but its cohomology in degree
vcd$+1$ will be zero.

First, we compute~$p_*$.  We use $\tau=1$, the first temperament, when
working with~$p$.  The retracts $\widetilde{W}$ and $\widetilde{W}_1$
are equal by definition. $\Gamma_0$ acts on $\widetilde{W}$, and the
smaller group $\Gamma$ acts on $\widetilde{W}_1$.  Computing the
transfer map is straightforward.  (In practice it is tricky to get the
orientation questions correct.  This is true for all the cells, and
especially for the cells with non-trivial stabilizer subgroups.  This
comment applies to all the computations in this paper.)

Next, we compute~$q^*$.  The pullback map is natural on cohomology,
but we must account for the factor of~$a$ in the definition of~$q$.
The key is to use the \emph{last} temperament~$\tau_0$ when working
with~$q$.  We compute $H^*_\Gamma(X; \rho)$ as
$H^*_\Gamma(\widetilde{W}_{\tau_0}; \rho)$.  By Theorem~\ref{wtctau0},
there is a homeomorphism of cell complexes $\widetilde{W}_{\tau_0} \to
\widetilde{W}_1$, from the last temperament to the first, given by
multiplication by~$a$.  As we saw for~$p$, $\widetilde{W}_1$
equals~$\widetilde{W}$.  Thus there is a cellular map which enables us
to compute $q^* : H^*_{\Gamma_0}(\widetilde{W}; \rho) \to
H^*_\Gamma(\widetilde{W}_{\tau_0}; \rho)$.

Computing only $p_*$ and $q^*$ does not give us the Hecke operator.
The map of Theorem~\ref{wtctau0} involves dividing or multiplying
by~$a$.  It is not a map of $\Gamma$-modules, because $a\in\Delta$ but $a \notin
\Gamma$ in general.  For this reason, we cannot directly map
$H^*_\Gamma(\widetilde{W}_{\tau_0}; \rho)$ to
$H^*_\Gamma(\widetilde{W}_1; \rho)$.  To overcome this last
difficulty, we use the whole well-tempered complex to define a chain
of morphisms and quasi-isomorphisms.  For $i=1,\dots, i_r$, in the
portion $\widetilde{W}_{[\tau^{(i-1)},\tau^{(i)}]}$ over the fibers $\tau \in
[\tau^{(i-1)},\tau^{(i)}]$, define the closed inclusions of the fibers on
the left and right sides:
\[
  \widetilde{W}_{\tau^{(i-1)}}
  \xhookrightarrow{l^{(i-1)}}
  \widetilde{W}_{[\tau^{(i-1)},\tau^{(i)}]}
  \xhookleftarrow{r^{(i)}}
  \widetilde{W}_{\tau^{(i)}}
  \]
By Theorems~\ref{wtccont} and~\ref{wtccell}, we can compute the
pullbacks $(l^{(i-1)})^*$ and the pushforwards $(r^{(i)})_*$ on
$H^*_\Gamma(\dots; \rho)$.  The pullback is a naturally defined
cellular map.  The pushforward $(r^{(i)})_*$ is a quasi-isomorphism,
the inverse of the pullback $(r^{(i)})^*$; we compute the pullback at
the cochain level using the cellular map, then invert the map on
cohomology.

We summarize our algorithm as a theorem.

\begin{theorem}[\cite{MM20}] \label{wtcalgor}
  With notation as above, the Hecke operator $T_a$ on equivariant
  cohomology~\eqref{twomapsequivar} may be computed in finite terms as
  the composition
  \begin{equation} \label{plrlrq}
  p_* {l^{(0)}}^* r^{(1)}_* {l^{(1)}}^* r^{(2)}_* \cdots {l^{(i_r-1)}}^* r^{(i_r)}_* q^* .
  \end{equation}
\end{theorem}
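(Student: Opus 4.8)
The plan is to assemble the chain of maps in \eqref{plrlrq} and check that the composition, as a map $H^*_{\Gamma_0}(X;\rho)\to H^*_{\Gamma_0}(X;\rho)$, agrees with the definition $p_*q^*$ of $T_a$ given in \eqref{twomapsequivar}. First I would observe that every fiber $\widetilde{W}_\tau$, every slab $\widetilde{W}_{[\tau',\tau'']}$, and the full complex $\widetilde{W}^+$ are acyclic $\Gamma$-CW complexes: the fibers by Ash's theorem (Theorem~\ref{wrrmain}, carried along $\tau$ by Theorem~\ref{wtccont}), the slabs because a slab deformation-retracts onto either bounding fiber, and $\widetilde{W}^+$ because it retracts onto $\widetilde{W}_1$. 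Hence each of them computes $H^*_\Gamma(X;\rho)$ (or $H^*_{\Gamma_0}$ when the bigger group acts, which happens only at $\tau=1$ via $\widetilde{W}=\widetilde{W}_1$, and at $\tau=\tau_0$ after pushing forward by $a$ using Theorem~\ref{wtctau0}). The inclusions $l^{(i-1)}$ and $r^{(i)}$ are cellular, so pullbacks along them are computable on cochains, and each $r^{(i)}$ induces an isomorphism on cohomology whose inverse is $(r^{(i)})_*$; this is what makes the odd-indexed factors in \eqref{plrlrq} well defined.

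Next I would check the two ends of the chain. At the $\tau=1$ end, $\widetilde{W}_1=\widetilde{W}$ carries the $\Gamma_0$-action and the inclusion $l^{(0)}\colon \widetilde{W}_{1}\hookrightarrow \widetilde{W}_{[1,\tau^{(1)}]}$, followed by $p_*$ (the transfer for $\Gamma\le\Gamma_0$), reproduces exactly the $p_*$ of \eqref{twomapsequivar}: here $p$ is the covering $\Gamma\bs X\to\Gamma_0\bs X$ and the transfer is insensitive to which acyclic model we resolve it on, so $p_*\circ(l^{(0)})^*$ equals $p_*$ precomposed with the canonical isomorphism $H^*_\Gamma(\widetilde{W}_{[1,\tau^{(1)}]})\cong H^*_\Gamma(X)$. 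At the $\tau=\tau_0$ end, I would use Theorem~\ref{wtctau0}: multiplication by $a^{-1}$ carries $\widetilde{W}_{\tau_0}$ homeomorphically and cellularly onto $\widetilde{W}_1=\widetilde{W}$, turning the $\Gamma$-action into the $\Gamma_0$-action twisted by $a$; this is precisely the geometric content of the map $q\colon\Gamma g K\mapsto \Gamma_0 a g K$. Thus $r^{(i_r)}_*q^*$, read through this identification, is the pullback along $q$ on the model $\widetilde{W}_{\tau_0}$.

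With the ends pinned down, the middle of \eqref{plrlrq} is the statement that the composite ${l^{(0)}}^* r^{(1)}_* {l^{(1)}}^* r^{(2)}_* \cdots {l^{(i_r-1)}}^* r^{(i_r)}_*$ is the canonical identification $H^*_\Gamma(\widetilde{W}_{\tau_0};\rho)\xrightarrow{\ \sim\ }H^*_\Gamma(\widetilde{W}_{1};\rho)$ coming from the fact that all these fibers are models for the same cohomology $H^*_\Gamma(X;\rho)$. To see this I would argue one slab at a time: on $\widetilde{W}_{[\tau^{(i-1)},\tau^{(i)}]}$ both bounding-fiber inclusions $l^{(i-1)}$ and $r^{(i)}$ are homotopy equivalences, so $(l^{(i-1)})^*$ and $(r^{(i)})^*$ are the same isomorphism up to the slab's cohomology, whence $(l^{(i-1)})^* (r^{(i)})_* = \bigl((r^{(i)})^*\bigr)^{-1}(l^{(i-1)})^*$ is exactly the ``transport across the slab'' isomorphism $H^*_\Gamma(\widetilde{W}_{\tau^{(i)}})\to H^*_\Gamma(\widetilde{W}_{\tau^{(i-1)}})$; composing these across $i=1,\dots,i_r$ telescopes to the identification from $\tau_0$ down to $1$. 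Splicing this with the two endpoint computations gives $p_*q^*=T_a$.

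The main obstacle I expect is not any single homotopy equivalence but the \emph{coherence} of all these identifications: one must verify that the homotopy equivalences furnished by Theorem~\ref{wtccont} are $\Gamma$-equivariant (so that they induce maps on $H^*_\Gamma$ at all) and that the chosen inverses $(r^{(i)})_*=\bigl((r^{(i)})^*\bigr)^{-1}$ are mutually compatible along the whole interval, so that no spurious automorphism of $H^*_\Gamma(X;\rho)$ creeps in when we telescope. Concretely this amounts to checking that the retractions $R_\tau(t)$ of Theorem~\ref{wtccont} glue to a $\Gamma$-equivariant deformation retraction of $\widetilde{W}^+$ onto each fiber, which is exactly the content already extracted from \cite{MM20} in the Corollary to Theorem~\ref{wtccont}; the bookkeeping of orientations and of the finite-stabilizer cells (flagged in the text after Theorem~\ref{wtctau0}) is the delicate part, but it is routine in principle.
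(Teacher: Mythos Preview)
Your proposal is correct and follows essentially the same route as the paper: the paper does not give a formal proof block for this theorem (it is cited from \cite{MM20}), but the paragraphs preceding the statement lay out exactly the argument you reconstruct---computing $p_*$ at the first temperament via the transfer, computing $q^*$ at the last temperament via the cellular homeomorphism of Theorem~\ref{wtctau0}, and bridging the two by the chain of pullbacks and inverted pullbacks across the slabs $[\tau^{(i-1)},\tau^{(i)}]$, each of which is a quasi-isomorphism because every fiber and slab is an acyclic $\Gamma$-complex. Your remarks on equivariance and on the coherence of the inverses $(r^{(i)})_*$ are apt and correspond to the paper's parenthetical caution about orientations and stabilizers.
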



\subsection{Cohomology of Subgroups}
\label{subsec:levelN}

Let $\Gamma' \subseteq \Gamma_0$ be an arithmetic subgroup.  We wish
to compute Hecke operators on the equivariant cohomology
$H^*_{\Gamma'}(X; \rho)$ for any~$\Gamma'$.  By Shapiro's Lemma
\cite[III.6.2]{Br}, $H^*_{\Gamma'}(X; \rho) \cong H^*_{\Gamma_0}(X;
\mathrm{Coind}_{\Gamma'}^{\Gamma_0}\rho)$.  We use
Theorem~\ref{wtcalgor} to compute the latter.


\section{A Subcomplex for $\Spf(\Z)$}
\label{sec:main}

\subsection{PL Embedding Lemma}
The well-rounded retract~$\widetilde{W}$ for $\SL_4(\R)$ has real
dimension~6.  All of its 6-cells are equivalent modulo $\SL_4(\Z)$; as
a representative 6-cell, we may choose the cell~$\sigma$ whose minimal
vectors are the columns of the $4\times 4$ identity matrix~\cite{Vor}.

\begin{definition} \label{defV1}
Denote by $\widetilde{V}_1$ the following closed subcomplex of $\widetilde{W}$:
\[
\widetilde{V}_1 = \left \{ \gamma\cdot\sigma \; | \; \gamma\in\Spf(\Z)
\right \}.
\]
\end{definition}
$\widetilde{V}_1$ has
an action of $\Spf(\Z)$, but not an action of $\SL_4(\Z)$. We will denote by~$\alpha$ a closed cell in $\widetilde{V}_1$ that is
a non-empty intersection of the form
\begin{equation} \label{capManyV1Cells}
    \alpha = \alpha_{i_1\cdots i_k}=\bigcap^k_{j=1}\gamma_{i_j} \sigma,
    \quad \gamma_{i_j}\in \Spf(\Z).
\end{equation}
We will use this notation to suppress indices wherever they do not
play a crucial role.

Let $\widetilde{V}$ be the retract for $\Spf(\R)$ constructed
in~\cite{MM93}. The following lemma allows us to
identify~$\widetilde{V}$ with its embedded image inside the subcomplex
$\widetilde{V}_1$ of $\widetilde{W}$ that we have just defined.

\begin{lemma} \label{lemmaPLemb}
There exists a PL-embedding $\widetilde{V}\rightarrow\widetilde{V}_1$.
\end{lemma}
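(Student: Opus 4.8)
The plan is to produce the embedding by comparing the two complexes cell-by-cell, using the explicit description of $\widetilde{V}$ from \cite{MM93} and \cite{MM89} in terms of symplectic projective configurations in $\mathbb{P}^3(\Q)$. First I would recall that the cells of $\widetilde{V}$ (called $W$ in \cite{MM93}) are indexed by certain configurations of points and subspaces of $\mathbb{P}^3(\Q)$ compatible with the form $\Omega$; each such cell sits inside $X \subset \mathcal{Q}$ as the set of positive definite symplectic forms whose minimal vectors realize the given configuration. On the other side, the cells of $\widetilde{W}$ are indexed by their sets of minimal vectors $M \subset L_0 - \{0\}$, and $\widetilde{V}_1$ is by definition the union of the $\Spf(\Z)$-translates of the top cell $\sigma$ whose minimal vectors are the columns of the identity. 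So the first key step is to set up a map on the level of cells: to each generating cell of $\widetilde{V}$, associate the face of some $\gamma\cdot\sigma$ ($\gamma \in \Spf(\Z)$) cut out by the same minimal-vector data. Because $\widetilde{V}$ is a retract of $X$ landing in the well-rounded locus, every point of $\widetilde{V}$ is a well-rounded symplectic lattice and hence lies in $\widetilde{W}$; the real content is that it lies in the \emph{$\Spf(\Z)$-orbit of faces of $\sigma$}, i.e., in $\widetilde{V}_1$.

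Next I would pass to barycentric subdivisions to make the map simplicial. As the Introduction already notes, \cite{MM93} yields an $\Spf(\Z)$-equivariant identification of $\widetilde{V}$ with a subcomplex of the first barycentric subdivision of $\widetilde{V}_1$; I would take exactly this as the definition of the candidate map $f\colon \widetilde{V} \to \widetilde{V}_1$. Concretely, a vertex of $\widetilde{V}$ corresponds to a configuration, which corresponds to a cell $\alpha$ of $\widetilde{V}_1$ as in \eqref{capManyV1Cells}, and one sends that vertex to the barycenter of $\alpha$; one then checks that an ascending chain of configurations (a simplex of $\widetilde{V}$) maps to a chain of faces $\alpha \subset \alpha' \subset \cdots$, i.e., to a simplex of the barycentric subdivision of $\widetilde{V}_1$. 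The map is injective on vertices because distinct configurations give distinct cells of $\widetilde{V}_1$ (the minimal-vector set determines the configuration and vice versa), and it is compatible with the face relations by construction, so $f$ is a simplicial embedding; being simplicial with respect to these linear cell structures, it is automatically PL.

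The step I expect to be the main obstacle is verifying that the image of each cell of $\widetilde{V}$ genuinely lies in $\widetilde{V}_1$ rather than merely in $\widetilde{W}$ --- that is, that every well-rounded symplectic form arising in $\widetilde{V}$ has its set of minimal vectors equal (up to $\Spf(\Z)$) to a \emph{subset} of the columns of the identity matrix, so that the corresponding $\widetilde{W}$-cell is a face of some $\gamma\sigma$. This amounts to showing that the minimal-vector configurations occurring in \cite{MM93} are precisely the symplectic sub-configurations of the standard basis configuration, which I would extract from the classification in \cite{MM89}: one matches each projective configuration type listed there with a choice of $k \le 6$ columns of a matrix in $\Spf(\Z)$ and checks the rank/spanning and minimality conditions. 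Once that dictionary is in place, equivariance under $\Spf(\Z)$ is immediate since both the construction of $\widetilde{V}$ and the definition of $\widetilde{V}_1$ are $\Spf(\Z)$-equivariant, and the lemma follows. A secondary technical point, which I would handle by a direct dimension count, is that the embedding need not be onto a full-dimensional subcomplex --- $\widetilde{V}$ has dimension $4$ while $\widetilde{V}_1$ has dimension $6$ --- but this is consistent with $f$ landing in the $4$-skeleton of the barycentric subdivision and causes no difficulty for the PL-embedding claim.
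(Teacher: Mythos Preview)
Your approach is correct and lands on the same underlying idea as the paper, but you are doing by hand what the paper dispatches functorially. The paper's proof observes that in both $\widetilde{V}$ and $\widetilde{W}$ the cells are indexed by their minimal-vector sets, with face relations given by reverse inclusion; this yields ranked posets $\mathscr{C}_{\mathrm{Sp}}$ and $\mathscr{C}_{\mathrm{SL}}$, and the construction of \cite{MM93} already provides an injective \emph{rank-preserving} poset homomorphism $\mathscr{C}_{\mathrm{Sp}}\to\mathscr{C}_{\mathrm{SL}}$. Applying geometric realization (which produces precisely the barycentric subdivisions and is faithful) gives the PL embedding in one line, with image in $\widetilde{V}_1$ because the top cells of $\widetilde{V}$ have for minimal vectors the columns of a symplectic matrix. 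Your barycenter-by-barycenter map is exactly what this unpacks to, so nothing is wrong; but the rank-preservation means a $k$-cell of $\widetilde{V}$ goes to a $k$-cell of $\widetilde{W}$, so the ``main obstacle'' you flag---matching each configuration type from \cite{MM89} with a sub-configuration of the standard basis---is already absorbed into the single citation of \cite{MM93}, and no separate case-check is needed. One small wobble: you slide between ``vertex of $\widetilde{V}$'' and ``simplex of $\widetilde{V}$ given by a chain of configurations''; the latter is a simplex of the barycentric subdivision, and keeping that straight is exactly what the poset language does for free.
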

\begin{proof}
  In both $\widetilde{W}$ and $\widetilde{V}$, the cells are in
  one-to-one correspondence with their sets of minimal vectors.  In
  either cell complex, a cell~$\alpha$ is a face of~$\beta$ if and
  only if the set of minimal vectors for~$\alpha$ contains the set of
  minimal vectors for~$\beta$, by~\cite{M91} and~\cite{MM93}.  Denote
  by $\mathscr{C}_{\textup{SL}}$ the poset of the sets of minimal
  vectors for~$\widetilde{W}$ and by $\mathscr{C}_{\textup{Sp}}$ the
  corresponding poset for $\widetilde{V}$.  These are ranked posets,
  where the rank of an item is the dimension of the corresponding
  cell.  By the construction in~\cite{MM93}, there is an injective
  homomorphism of ranked posets $\mathscr{C}_{\textup{Sp}}\rightarrow
  \mathscr{C}_{\textup{SL}}$. Since geometric realization is a
  faithful functor, it follows that there is PL-embedding
  $\widetilde{V}\rightarrow \widetilde{W}$, whose image is contained
  in $\widetilde{V}_1$.
\end{proof}

\subsection{Thickening Theorem}

We remarked in the introduction that the PL embedding
$\widetilde{V}\rightarrow\widetilde{V}_1$ is a thickening
of~$\widetilde{V}$, raising the dimension from~4 to~6.  The main
theorem of this Section is that the two spaces have the same topology.

\begin{theorem} \label{thickening}
  The PL embedding $\widetilde{V}\rightarrow\widetilde{V}_1$ induces
  an isomorphism on cohomology.  In particular, $\widetilde{V}_1$ is acyclic.
\end{theorem}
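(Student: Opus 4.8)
The plan is to show that the inclusion $\widetilde V \hookrightarrow \widetilde V_1$ is a homotopy equivalence by exhibiting $\widetilde V$ as a deformation retract of $\widetilde V_1$ (equivalently, by showing $\widetilde V_1$ deformation retracts onto $\widetilde V$ in a $\Spf(\Z)$-equivariant way). Since $\widetilde V$ is contractible by \cite{MM93}, this immediately gives that $\widetilde V_1$ is acyclic, which is the ``in particular'' clause. I would carry this out cell by cell, using the combinatorial structure of the minimal-vector posets $\mathscr{C}_{\textup{Sp}} \subset \mathscr{C}_{\textup{SL}}$ from Lemma~\ref{lemmaPLemb}.

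First I would set up the local model at a single top-dimensional cell. The representative $6$-cell $\sigma$ of $\widetilde W$ has minimal-vector set equal to the columns of the identity; its faces correspond to the supersets of this minimal-vector set that arise as minimal-vector sets of cells of $\widetilde W$. Among these faces, exactly those lying in the image of $\mathscr{C}_{\textup{Sp}}$ belong to $\widetilde V$. So within $\sigma$ (and more generally within each intersection cell $\alpha = \alpha_{i_1\cdots i_k}$ of \eqref{capManyV1Cells}), the ``symplectic part'' $\alpha \cap \widetilde V$ is a subcomplex, and I want to produce a retraction $\alpha \to \alpha \cap \widetilde V$ that is compatible with face inclusions and with the $\Spf(\Z)$-action. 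The cleanest route is to choose, $\Spf(\Z)$-equivariantly, an interior point (e.g.\ a barycenter-type point) in each cell of $\widetilde V$ together with a ``cone direction'' so that the straight-line, or PL, contraction of each $\widetilde W$-cell onto the relevant symplectic subcomplex glues across faces. Here the fact that $\widetilde V$ embeds in the \emph{first barycentric subdivision} of $\widetilde V_1$ (as noted in the introduction, following \cite{MM93}) is exactly the structural input that makes such a compatible system of cones exist: on the barycentric subdivision, $\widetilde V$ appears as a full subcomplex spanned by a down-closed set of vertices under a suitable ordering, and there is a standard deformation retraction of a simplicial complex onto a full subcomplex determined by a vertex-decreasing (or -increasing) collapse.

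Concretely, the key steps would be: (i) pass to the first barycentric subdivision $\widetilde V_1'$ of $\widetilde V_1$, where by \cite{MM93} the copy of $\widetilde V$ is a full subcomplex whose vertices are a distinguished subset $S$ of the vertices of $\widetilde V_1'$ (those barycenters corresponding to cells in the image of $\mathscr{C}_{\textup{Sp}}$); (ii) verify that $S$ is ``closed under the face relation in the relevant direction'' — i.e.\ that for every simplex of $\widetilde V_1'$, the subset of its vertices lying in $S$ spans a face, so that the nearest-point / barycentric projection sending a simplex to the face it spans on $S$ is well defined and continuous; (iii) check that this projection, performed simultaneously on all simplices, is a simplicial (or PL) deformation retraction $\widetilde V_1' \to \widetilde V$, using that each star of a vertex not in $S$ collapses onto the part of its boundary inside the $S$-spanned subcomplex; (iv) observe the whole construction is canonical, hence $\Spf(\Z)$-equivariant, since it only uses the poset data which is $\Spf(\Z)$-equivariant; (v) conclude $\widetilde V \hookrightarrow \widetilde V_1$ is a homotopy equivalence, so it induces an isomorphism on cohomology (with any coefficients, in particular the local system $\rho$), and since $\widetilde V$ is contractible \cite{MM93}, $\widetilde V_1$ is acyclic.

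The main obstacle is step (ii): showing that the symplectic vertices $S$ really do form a subcomplex with the one-sided closure property needed for a deformation retraction, rather than merely an injective subposet. This is where the precise combinatorics of \cite{MM89,MM93} — the description of symplectic cells via projective configurations in $\mathbb P^3(\Q)$ with the form $\Omega$ — has to be brought in: one must show that if a chain of cells of $\widetilde W$ has both its symplectic and non-symplectic members, deleting the non-symplectic ones still leaves a chain (equivalently, the minimal-vector set of any cell of $\widetilde V_1$ is contained in the minimal-vector set of some cell of $\widetilde V$, in a way that is monotone along faces). If that monotonicity fails for some configuration of cells, the global retraction will not glue, and one would instead have to argue more delicately — e.g.\ by a spectral-sequence or Mayer–Vietoris argument over the (finitely many) $\Spf(\Z)$-orbits of cells, showing each fiber of the ``thickening'' $\alpha \to \alpha\cap\widetilde V$ is contractible and invoking a Vietoris–Begle / Leray-type theorem. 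I would first attempt the direct retraction and fall back on the fiberwise-acyclicity argument only if the combinatorial closure property genuinely breaks.
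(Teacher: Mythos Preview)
Your fallback plan---a Mayer--Vietoris/\v{C}ech argument with the key input that each $\alpha\cap\widetilde V$ is contractible---is precisely the route the paper takes. The paper covers $\widetilde V_1$ by the interiors $N_\alpha^\circ$ of second derived neighborhoods of the cells $\alpha$, checks that these intersect according to $N_{\alpha_1}^\circ\cap N_{\alpha_2}^\circ=N_{\alpha_1\cap\alpha_2}^\circ$ (Lemma~\ref{lemmaNhbd}), and then runs the spectral sequence of the associated double complex for the pair $(\widetilde V_1,\widetilde V)$; vanishing of the relative homology follows once each $\alpha\cap\widetilde V$ is known to be contractible (Proposition~\ref{adhesions}), and the long exact sequence of the pair finishes the proof.

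The substantive gap in your proposal is the expectation that this local contractibility---or the stronger monotone closure property needed for your direct retraction---can be read off the configuration descriptions in \cite{MM89,MM93}. In the paper it is not: Proposition~\ref{adhesions} is established by a machine enumeration of all intersection cells $\alpha\subseteq\sigma$ via their minimal-vector sets, followed by case-by-case verification that the finite simplicial complex $\alpha\cap\widetilde V$ has trivial reduced $\Z$-homology, trivial $\pi_1$, and is shellable; the run took about a week. So your primary route would require either a uniform conceptual reason that every $\alpha\cap\widetilde V$ is a cone or otherwise visibly contractible---which the paper does not have, and which the need for shellability and $\pi_1$ checks suggests is not uniformly transparent---or else the same computer enumeration. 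Note also that the condition you literally state in step~(ii), that ``deleting the non-symplectic members of a chain still leaves a chain,'' is automatic (any subset of a totally ordered set is totally ordered) and only yields fullness of $\widetilde V$ in the barycentric subdivision; fullness alone does not produce a deformation retraction, and the missing hypothesis is essentially the local contractibility of Proposition~\ref{adhesions}.

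In short: your Plan~B is the paper's proof; your Plan~A, if it went through, would give the stronger conclusion of an $\Spf(\Z)$-equivariant deformation retraction, but the combinatorial check it rests on is not a formality and in the paper is handled by direct computation rather than by structural argument.
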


\subsection{Local Contractibility}

We need a local result about contractibility.  In the next section,
this will be extended to prove the global result that
$\widetilde{V}_1$ is acyclic.

\begin{proposition} \label{adhesions}
  For any~$\alpha$ of the form~\eqref{capManyV1Cells}, $\alpha\cap
  \widetilde{V}$ is a contractible subcomplex of~$\widetilde{V}_1$.
\end{proposition}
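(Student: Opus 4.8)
The plan is to understand the intersection $\alpha\cap\widetilde{V}$ cell-combinatorially and to show it is a cone (or can be contracted onto a single cell), using the poset description from Lemma~\ref{lemmaPLemb}. First I would recall that under the PL embedding, $\widetilde{V}$ sits inside the first barycentric subdivision of $\widetilde{V}_1$, and a cell of $\widetilde{V}$ corresponds to a chain $M_0\supsetneq M_1\supsetneq\cdots$ of sets of minimal vectors of cells of $\widetilde{W}$ that happen to lie in $\mathscr{C}_{\textup{Sp}}$. So $\alpha\cap\widetilde{V}$ is the union of those simplices of the barycentric subdivision of $\alpha$ whose vertices are barycenters of faces of $\alpha$ that are symplectic cells — i.e., it is the geometric realization of the subposet $P_\alpha\subseteq\mathscr{C}_{\textup{Sp}}$ consisting of the symplectic cells contained in (a face of) $\alpha$. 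Since $\alpha=\bigcap_j\gamma_{i_j}\sigma$ is itself a cell of $\widetilde{W}$ with some minimal-vector set $Q$, its faces in $\widetilde{W}$ are exactly the cells with minimal-vector set containing $Q$; among these, the symplectic ones form $P_\alpha$.

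The key step is then to show $|P_\alpha|$ is contractible. The natural approach is to produce a maximum (or minimum) element, or a retraction, giving a cone. Concretely: among all sets of minimal vectors $R\supseteq Q$ with $R$ symplectic (i.e., the corresponding cell lies in $\widetilde{V}$), I would argue there is a \emph{largest} such $R$ — equivalently a smallest-dimensional symplectic face of $\alpha$ — coming from the closure-finite structure of $\widetilde{V}$ inside $\widetilde{W}$: the symplectic structure cuts out a \emph{sub}complex, and over the region $\alpha$ the portion of $\widetilde{V}$ meeting $\alpha$ is the closure of a single top-dimensional symplectic cell of $\widetilde{V}$ that lies ``under'' $\alpha$. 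If such a maximum symplectic face $\beta_\alpha$ exists, then every symplectic $R\supseteq Q$ satisfies $R\subseteq$(minimal vectors of $\beta_\alpha$) or, more to the point, the poset $P_\alpha$ has $\beta_\alpha$ as a cone point, making $|P_\alpha|$ a cone and hence contractible. Failing a literal maximum, I would fall back on a poset-fiber argument (Quillen's Theorem A, or a discrete-Morse / closure-operator argument): define $c:P_\alpha\to P_\alpha$ sending a symplectic face to the symplectic face spanned by its minimal vectors together with the ``symplectically forced'' extra vectors, check $c$ is a closure operator (order-preserving, $c^2=c$, $x\le c(x)$ in the face order), and conclude $|P_\alpha|$ deformation-retracts onto $|c(P_\alpha)|$, which is a point.

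The main obstacle is establishing that $\alpha\cap\widetilde{V}$ really does have a cone point — i.e., that among the symplectic faces of a given $\widetilde{W}$-cell $\alpha$ there is a canonical largest one. This is essentially a statement about how the configuration-theoretic cells of \cite{MM89,MM93} sit inside the Voronoi cells of $\SL_4$, and it will require unwinding the explicit correspondence between minimal-vector sets and symplectic projective configurations in $\mathbb{P}^3(\Q)$ with the form $\Omega$. I expect one must check case-by-case, over the finitely many $\Spf(\Z)$-orbits of intersections $\alpha=\alpha_{i_1\cdots i_k}$, that the set of minimal vectors of $\alpha$ determines a unique minimal symplectic-configuration cell containing it; equivalently that the ``symplectic span'' of a configuration of vectors is well-defined. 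Once that combinatorial input is in hand, contractibility of $|P_\alpha|$ is immediate. I would also verify the easy boundary cases: if $\alpha$ is itself symplectic then $\alpha\cap\widetilde{V}=\alpha$ is a cell, hence contractible; if $\alpha$ contains no symplectic face at all then one must confirm this cannot happen for $\alpha$ of the form \eqref{capManyV1Cells}, since each $\gamma_{i_j}\sigma$ is a $\Spf(\Z)$-translate of the standard cell $\sigma$ and the barycenter of $\alpha$ always lies on a symplectic cell by construction of $\widetilde{V}$ in \cite{MM93}.
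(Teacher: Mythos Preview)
Your identification of $\alpha\cap\widetilde{V}$ with the order complex of the poset $P_\alpha$ of symplectic faces of~$\alpha$ is correct and matches how the paper sets things up.  The gap is in the next step.  You propose to contract $|P_\alpha|$ by exhibiting a cone point---a unique largest symplectic $R\supseteq Q$, equivalently a unique minimal-dimensional symplectic face of~$\alpha$---or, failing that, a closure operator.  Neither is established, and the paper's own argument gives strong evidence that no such uniform structural shortcut exists: the authors explicitly separate out the cases where $\alpha$ is already symplectic (so $P_\alpha$ has a top element) and where $\alpha_\triangle$ is a single simplex (so $P_\alpha$ is a chain), and then treat the remaining cases as ``a more general finite simplicial complex'' for which they verify by machine that reduced $\Z$-homology and $\pi_1$ are trivial, and also that the complex is shellable.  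If a cone point or closure operator were available, these third-case complexes would collapse trivially and no computation would be needed.

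Your fallback---``check case-by-case over the finitely many $\Spf(\Z)$-orbits of intersections''---is in fact exactly what the paper does, but you underestimate its scale.  The paper enumerates all top-cells $\gamma\sigma$ meeting~$\sigma$ (a finite set~$\mathscr{R}$), hashes all non-empty $k$-fold intersections by their minimal-vector sets, builds $\alpha_\triangle$ for each, and then runs homology, fundamental-group, and shellability checks in Sage; the run took seven days (about one day without the redundant $\pi_1$ and shellability checks), with minimal-vector sets of size up to~$8$.  So this is a genuine computer-assisted proof, not a hand verification of ``symplectic span'' being well defined.  Also, your last paragraph's claim that ``the barycenter of~$\alpha$ always lies on a symplectic cell'' is not right as stated: the barycenter of~$\alpha$ lies on $\widetilde{V}$ only when $\alpha$ is itself a symplectic cell; what one needs (and what the enumeration confirms) is merely that $P_\alpha$ is non-empty for every~$\alpha$ of the form~\eqref{capManyV1Cells}.
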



\begin{proof}
Without loss of generality, we may assume $\alpha$ is a face of
$\sigma$. Indeed, by its definition, $\widetilde{V}_1$ is invariant
under $\Spf(\Z)$, so we may replace~$\alpha$ by $\gamma\alpha$ for any
$\gamma\in\Spf(\Z)$.  After this replacement, we may take
$\gamma_{i_1} = I$.

  Let $\mathscr{R}$ be the set of all cells $\widetilde{W}$ which have
  the form $\gamma\sigma$ for some $\gamma\in\Spf(\Z)$ and such that
  $\gamma\sigma \cap \sigma \ne\varnothing$.  By definition,
  $\mathscr{R}$ is a subset of $\widetilde{V}_1$.  It is finite, by
  the local finiteness of $\widetilde{W}_1$.  Every non-empty~$\alpha$
  of the form~\eqref{capManyV1Cells} will have all of its
  $\gamma_{i_j}\sigma$ in~$\mathscr{R}$, given the constraint
  $\alpha\subseteq\sigma$.

  We use a computer to enumerate and store~$\mathscr{R}$, as follows.
  Enumerate all the faces~$\beta$ of~$\sigma$ (these have dimensions
  $0,\dots,6$).  For each~$\beta$, let~$M$ be the set of its minimal
  vectors; $M$ is a subset of $\Z^4$ containing between~4 and~12
  vectors.  (We find~$M$ based on the tables in~\cite{M91}.  Vectors
  $\vec{x}$ and $-\vec{x}$ in~$M$ are counted only once.)  For
  each~$M$, consider all $\binom{|M|}{4}$ four-element subsets $M_4$.
  We test whether we can permute the columns of $M_4$, and multiply
  zero or more of its columns by~$-1$, to make $M_4\in\Spf(\Z)$.  If
  the test passes, then $\gamma = M_4 \in\Spf(\Z)$ is such that
  $\gamma\sigma\in\mathscr{R}$.

  Next, we compute all $\alpha$'s by computing all $k$-fold
  intersections of cells in~$\mathscr{R}$.  We use a hash table whose
  value is an~$\alpha$ as in~\eqref{capManyV1Cells}, and whose key~$M$
  is the union of the minimal vectors for the $\gamma_{i_j}$ appearing
  in the intersection.  (In other words, $M$ is the union of the
  column vectors~$\vec{x}$ in the matrices $\gamma_{i_1} = I$,
  $\gamma_{i_2}$, \dots, $\gamma_{i_k}$, and $-\vec{x}$ too.)  We use
  a loop to fill the hash table first with ($k=1$)-fold intersections
  (which means $\gamma_{i_1} = I$ only), then ($k=2$)-fold
  intersections, then $k=3$, etc.  When a value~$\alpha$ becomes the
  empty cell, we stop exploring that branch of the table.

  Consider one of the~$\alpha$ in the table.  As we have said,
  $\alpha$ is a PL cell, hence is contractible.  What the proposition
  asserts is that $\alpha\cap \widetilde{V}$ is contractible.  Let~$B$
  be the set of sets of minimal vectors $M_\beta$ for all
  faces~$\beta$ of~$\sigma$ which contain~$\alpha$ and such
  that~$M_\beta$ is one of the sets of minimal vectors occurring
  in~$\widetilde{V}$.  In terms of Lemma~\ref{lemmaPLemb}, each $M_\beta \in B$
  determines a vertex in the image of the PL embedding, and the
  containment relations among the sets determine a simplicial
  subcomplex $\alpha_\triangle$ of the image of the PL embedding.
  This subcomplex $\alpha_\triangle$ is $\alpha\cap \widetilde{V}$.

  Showing, for each~$\alpha$, that $\alpha_\triangle$ is contractible
  is a matter of direct checking.  The first possibility is that the
  minimal vectors of~$\alpha$ already determine a cell
  in~$\widetilde{V}$; then $\alpha_\triangle$ is homeomorphic to the
  first barycentric subdivision of~$\alpha$ itself, hence is
  contractible.  The second possibility is that $\alpha_\triangle$ is
  a single closed simplex; obviously this is contractible.  The third
  possibility is that $\alpha_\triangle$ is a more general finite
  simplicial complex.  Here we use computation to verify three facts
  about $\alpha_\triangle$: its reduced homology with coefficients
  in~$\Z$ is trivial, its fundamental group is trivial, and it is
  shellable.  For a finite simplicial complex, trivial $\Z$-homology
  together with trivial fundamental group imply $\alpha_\triangle$ has
  the homotopy type of a point; this gives one proof that
  $\alpha_\triangle$ is contractible.  A second proof is that a
  shellable complex is a bouquet of spheres, and trivial $\Z$-homology
  implies the number of spheres in the bouquet is zero.
\end{proof}

\subsubsection{Performance of the algorithm}
The computation in the previous proof was coded up in
Sage~\cite{SageMath}.  In the last paragraph of the proof, when
$\alpha_\triangle$ was a ``more general finite simplicial complex'',
we checked that its reduced $\Z$-homology was trivial, that its
fundamental group~$\pi_1$ was trivial, \emph{and} that it was
shellable.  As the proof says, checking shellability was unnecessary
given the first two.  Nevertheless, we were curious to see how the
$\pi_1$ and shellability algorithms would perform, so we used them
both.

The code completed, proving Proposition~\ref{adhesions}, in seven
days.  Without checking $\pi_1$ and shellability, it would have
completed in less than~24 hours.  The largest sets~$M$ encountered had
$|M|=8$.

\subsection{Proof of the Thickening Theorem}

We recall results about second derived neighborhoods. Let $K$ be
a simplicial complex. For a simplex $A\in K$, the \textit{star of A in
  K} is the following open subcomplex of $K$:
\[
    \textup{star}(A;K)=\left \{ B\in K \; | \; B \geq A \right \}
\]
where the relation $\geq$ is cellular inclusion. Its closure  $\overline{ \textup{star}}(A;K)$ comprises the cells of $\textup{star}(A;K)$ and their faces. 

A subcomplex $K_0\subseteq K$ is called \textit{full} if no simplex of $K-K_0$ has all of its vertices in $K_0$. The \textit{closed simplicial neighborhood} of a full subcomplex $K_0$ in $K$ is formed by taking the following union of closed stars:
\[
    N(K_0;K)=\bigcup_{\textup{vertices} \; v\in K_0}\overline{\textup{star}}(v;K)
\]
\indent
Denote by $\left |  N(K_0;K) \right |$ the underlying polyhedron of this closed simplicial neighborhood. If $K_0\subseteq K$ is a full subcomplex, then $\left |  N(K_0;K) \right |$ is referred to as a \textit{derived neighborhood} of the polyhedron $\left |  K_0 \right |$ in the PL-manifold $\left |  K \right |$. More generally, let $K^{(r)}$ be the $r^\textup{th}$ barycentric subdivision of the complex $K$. Then, for a full subcomplex $K_0\subseteq K$, the polyhedron $\left |  N(K^{(r)}_0;K^{(r)}) \right |$ is the $r^\textup{th}$ \textit{derived neighborhood} of $\left |  K_0 \right |$ in $\left |  K \right |$. That is:
\[
    N(K^{(r)}_0;K^{(r)})=\bigcup_{\textup{vertices} \; v\in K^{(r)}_0}\overline{\textup{star}}(v;K^{(r)})
\]

\begin{theorem} [{\cite[Thm.~2.11]{Hu}}] \label{dylanHudson}
The second derived neighborhood of a full subcomplex $K_0\subseteq K$ is a regular neighborhood of $\left |  K_0 \right |$ in the PL-manifold $\left |  K \right |$. In particular, it is a strong deformation retract of $\left |  K_0 \right |$.
\end{theorem}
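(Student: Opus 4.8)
The plan is to prove the statement in the form in which the classical regular neighborhood theory gives it. Recall that a \emph{regular neighborhood} of a compact polyhedron $X$ in a PL $m$-manifold $M$ is a compact PL submanifold $N\subseteq M$ which is a topological neighborhood of $X$ and which \emph{collapses} to $X$, written $N\searrow X$. An elementary collapse across a free face is a strong deformation retraction, and strong deformation retractions compose, so a collapse $N\searrow X$ yields at once that $X$ is a strong deformation retract of $N$; hence the final clause of the theorem is a formal consequence once we know the second derived neighborhood is a regular neighborhood. So the real content is to verify three things about $N:=|N(K_0'';K'')|$, where $K''$ is the second barycentric subdivision of $K$ and $K_0''$ the induced subdivision of $K_0$: that $N$ is a neighborhood of $|K_0|$; that $N\searrow|K_0|$; and that $N$ is a compact PL manifold with boundary, with $|K_0|$ meeting it only in its interior away from $\partial M$.

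First I would record that barycentric subdivision preserves fullness, so $K_0'$ is full in $K'$ and $K_0''$ is full in $K''$; the purpose of passing all the way to the \emph{second} subdivision is the extra room it provides in the two steps below. Fullness of $K_0''$ in $K''$ implies that the union of the \emph{open} stars in $K''$ of the vertices of $K_0''$ is an open subset of $|K|$ containing $|K_0|$, whose closure is exactly $N$; this gives the neighborhood property. I would then fix the join decomposition that organizes the rest: writing $K_1''$ for the full subcomplex of $K''$ spanned by the vertices \emph{not} in $K_0''$, every simplex of the closed simplicial neighborhood is uniquely a join $A*B$ with $A$ a nonempty simplex of $K_0''$ and $B$ a (possibly empty) simplex of $K_1''$, and the frontier of $N$ in $M$ is the subpolyhedron assembled from the joins with $B\ne\varnothing$.

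Next, the collapse $N\searrow|K_0|$. The key point is that a derived neighborhood of a full subcomplex is PL homeomorphic to a \emph{mapping cylinder}: the simplicial projection defined on the relevant join cells by $A*B\mapsto A$ realizes the derived neighborhood as the mapping cylinder of the induced PL map from the frontier onto $|K_0|$, and a mapping cylinder always collapses to its base; passing to the second subdivision is what makes this projection genuinely simplicial and the identification clean. Equivalently one can write the collapse down by hand: order the simplices $A*B$ of $N\setminus K_0''$ by $\dim B$ and then lexicographically, and collapse each such $A*B$ across the free face $A*B_0$ for a fixed facet $B_0$ of $B$; the join decomposition makes checking freeness at each stage routine. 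Either way $N\searrow|K_0|$, and the strong deformation retraction follows.

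The remaining step, and the one I expect to be the main obstacle, is that $N$ is genuinely a PL manifold with boundary. For a vertex $v$ of $K''$ the closed star is the cone $v*\mathrm{lk}(v;K'')$, and since $|K|$ is a PL manifold the link $\mathrm{lk}(v;K'')$ is a PL $(m-1)$-sphere when $v$ is interior to $M$ and a PL $(m-1)$-ball when $v\in\partial M$, so each closed star is a PL $m$-ball. $N$ is a union of such balls, and one must show the union stays a manifold along the frontier, i.e. that near a frontier point $N$ looks like a half-space. Locally at $v\in K_0''$ this amounts to showing that the ``outward'' part of $\mathrm{lk}(v;K'')$ — the part contributed by $K_1''$ — is itself a PL $(m-1)$-ball, which rests on the Newman/Alexander fact that the closure of the complement of an open PL ball in a PL sphere is a PL ball, applied cell by cell; the extra room provided by the \emph{second} subdivision is precisely what keeps these pieces from interfering so that the cell-by-cell application goes through. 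I would invoke the underlying sphere-and-ball facts — links in combinatorial manifolds are spheres or balls, Newman's theorem, and ``mapping cylinders collapse'' — from the standard PL toolkit rather than reprove them, so that the genuinely new labor is confined to organizing the join decomposition, the explicit collapse, and this local manifold check.
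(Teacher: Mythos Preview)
The paper does not prove this theorem at all: it is quoted verbatim from Hudson's PL topology text and used as a black box, so there is no ``paper's own proof'' to compare against. Your outline is essentially the classical argument one finds in Hudson (or Rourke--Sanderson): fullness gives the join decomposition $A*B$ with $A\in K_0''$, $B\in K_1''$; the derived neighborhood is a mapping cylinder over $|K_0|$ and hence collapses to it; and the manifold property follows from the sphere/ball structure of links in a combinatorial manifold together with Newman's theorem. That is the right skeleton, and nothing in it is wrong in spirit.

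Two small points worth tightening if you ever flesh this out. First, your description of the frontier is slightly off: the frontier of $N$ in $|K|$ is not ``the joins with $B\ne\varnothing$'' but rather the union of the $B$-faces themselves, i.e.\ the subcomplex of $K_1''$ consisting of those $B$ for which some $A*B$ lies in $K''$ (the ``simplicial complement'' $\dot N$). The joins $A*B$ with both $A,B\ne\varnothing$ fill the collar between $|K_0|$ and the frontier. Second, ``barycentric subdivision preserves fullness'' undersells the situation: after one barycentric subdivision \emph{every} subcomplex becomes full, which is exactly why one derived neighborhood already collapses and the second is needed only for the manifold-with-boundary verification you correctly flag as the substantive step.
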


With these preliminaries, we return to the proof of the main theorem.
With respect to a fixed triangulation of the well-rounded
retract~$\widetilde{W}$, the complex $\widetilde{V}_1$ is a simplicial
subcomplex of the first barycentric subdivision $\widetilde{W}^{(1)}$.
By~\cite{Ash84} and~\cite{MM20}, each closed cell $\alpha \in
\widetilde{V}_1$ is convex.  By convexity, any simplex of
$\widetilde{W}^{(1)}$ having all of its vertices in~$\alpha$ must be
contained in~$\alpha$, since a simplex is the convex hull of its
vertices.  Therefore, $\alpha$ is a full simplicial subcomplex of
$\widetilde{V}_1$, and Theorem~\ref{dylanHudson} applies.

Let $\widetilde{V}_1^{(2)}$ denote the second barycentric subdivision
of $\widetilde{V}_1$. For each closed cell $\alpha\in
\widetilde{V}_1$, form the simplicial subcomplexes
$N(\alpha^{(2)};\widetilde{V}_1^{(2)})$ of $\widetilde{V}_1^{(2)}$,
and denote by $N_\alpha$ the corresponding second derived
neighborhood. By Theorem~\ref{dylanHudson}, $N_\alpha$ is a regular
neighborhood of $\alpha$ in $\widetilde{V}_1$, whence its interior
$N^\circ_\alpha$ is a strong deformation retract of
$\alpha$. Moreover, we have the following lemma:
\begin{lemma} \label{lemmaNhbd}
For distinct cells $\alpha_1,\alpha_2\in \widetilde{V}_1$ with common face $\alpha_1\cap\alpha_2=\alpha$ one has:
\[
    N^\circ_{\alpha_1}\cap N^\circ_{\alpha_1}=N^\circ_\alpha
\]
\end{lemma}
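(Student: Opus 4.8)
The plan is to prove Lemma~\ref{lemmaNhbd} by reducing the statement about second derived neighborhoods in $\widetilde{V}_1$ to a purely combinatorial statement about stars of vertices in the second barycentric subdivision. First I would recall that for a full subcomplex $K_0 \subseteq K$, the open derived neighborhood $N^\circ(K_0^{(r)}; K^{(r)})$ consists exactly of those open simplices of $K^{(r)}$ whose closure meets $K_0$; equivalently, a point lies in $N^\circ(K_0^{(2)}; K^{(2)})$ if and only if its carrier (the unique open simplex of $K^{(2)}$ containing it) has at least one vertex in $K_0^{(2)}$. The key point is that a vertex of $\widetilde{V}_1^{(2)}$ is the barycenter of a simplex of $\widetilde{V}_1^{(1)}$, which in turn is a flag $\beta_0 < \beta_1 < \cdots < \beta_m$ of cells of $\widetilde{V}_1$; such a vertex lies in $\alpha^{(2)}$ precisely when the bottom cell $\beta_0$ of the flag is a face of $\alpha$, i.e. $\beta_0 \subseteq \alpha$.

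Second, I would unwind what it means for a point $x \in \widetilde{V}_1$ to lie in $N^\circ_{\alpha_1} \cap N^\circ_{\alpha_2}$. Let $C$ be the carrier of $x$ in $\widetilde{V}_1^{(2)}$; this $C$ corresponds to a chain of flags, but all that matters is the set of cells of $\widetilde{V}_1$ that appear as bottom elements of the flags indexing the vertices of $C$. Membership in $N^\circ_{\alpha_i}$ says that at least one such bottom cell $\beta$ satisfies $\beta \subseteq \alpha_i$. Using fullness of each $\alpha_i$ and the convexity of the closed cells of $\widetilde{V}_1$ (both established just before Theorem~\ref{dylanHudson}), one shows that the collection of cells $\beta$ arising as bottoms of the flags indexing $C$ has a unique minimal element $\beta_{\min}$, and every other such $\beta$ contains $\beta_{\min}$. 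Hence $x \in N^\circ_{\alpha_i}$ forces $\beta_{\min} \subseteq \alpha_i$: because if some $\beta \subseteq \alpha_i$ with $\beta_{\min} \subseteq \beta$, then $\beta_{\min} \subseteq \alpha_i$ as well, $\alpha_i$ being a subcomplex. Therefore $x \in N^\circ_{\alpha_1} \cap N^\circ_{\alpha_2}$ iff $\beta_{\min} \subseteq \alpha_1$ and $\beta_{\min} \subseteq \alpha_2$, i.e. $\beta_{\min}$ is a common face of $\alpha_1$ and $\alpha_2$. Since $\alpha_1 \cap \alpha_2 = \alpha$ in the (convex, regular-cell) complex $\widetilde{V}_1$, being a common face of $\alpha_1$ and $\alpha_2$ is the same as being a face of $\alpha$, so $\beta_{\min} \subseteq \alpha$, which says exactly $x \in N^\circ_\alpha$. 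The reverse inclusion $N^\circ_\alpha \subseteq N^\circ_{\alpha_1} \cap N^\circ_{\alpha_2}$ is immediate since $\alpha$ is a face of each $\alpha_i$, so $\alpha^{(2)} \subseteq \alpha_i^{(2)}$ and hence $N^\circ_\alpha \subseteq N^\circ_{\alpha_i}$.

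The step I expect to be the main obstacle is establishing that the bottom cells of the flags indexing the carrier $C$ form a chain (totally ordered by face inclusion), so that $\beta_{\min}$ is well defined. This is where the regular-cell-complex structure of $\widetilde{V}_1$ really gets used: one needs that the intersection of two closed cells of $\widetilde{V}_1$ is again a (possibly empty) common face — which holds here because the closed cells are convex polytopes meeting along common faces — and that in the first barycentric subdivision $\widetilde{V}_1^{(1)}$, the simplices are precisely the flags of cells, so that a single simplex of $\widetilde{V}_1^{(2)}$ lives inside the (barycentrically subdivided) star of a flag and its bottom cells are comparable. I would phrase this carefully as a preliminary combinatorial claim about order complexes of the face poset of a regular cell complex, prove that claim first, and then feed it into the set-theoretic argument above. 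The remaining manipulations — translating "lies in the open star of a vertex of $\alpha^{(2)}$" into "has a face of $\alpha$ among the bottom cells of its carrier", and matching "common face of $\alpha_1$ and $\alpha_2$" with "face of $\alpha_1 \cap \alpha_2$" — are routine once this comparability is in hand.
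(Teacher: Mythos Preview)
Your approach is in the same spirit as the paper's, but far more detailed: the paper simply observes that the vertex set of $\alpha^{(2)}$ equals the intersection of the vertex sets of $\alpha_1^{(2)}$ and $\alpha_2^{(2)}$ and declares that this gives $N(\alpha_1^{(2)};\widetilde{V}_1^{(2)})\cap N(\alpha_2^{(2)};\widetilde{V}_1^{(2)})=N(\alpha^{(2)};\widetilde{V}_1^{(2)})$. Your carrier/flag analysis is exactly what is needed to justify that step, since for arbitrary full subcomplexes $A,B$ of a simplicial complex one does \emph{not} have $N^\circ(A)\cap N^\circ(B)=N^\circ(A\cap B)$ in general; it is the special structure of a second barycentric subdivision that makes it work here, and you are right to isolate that.

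There is, however, a concrete slip in your key combinatorial claim. If a vertex of $\widetilde{V}_1^{(2)}$ is the barycenter of the simplex of $\widetilde{V}_1^{(1)}$ corresponding to the flag $\beta_0<\beta_1<\cdots<\beta_m$, then that barycenter lies in the \emph{open} cell $\beta_m^\circ$, not in $\beta_0$. Hence the correct criterion is
\[
\text{(barycenter of the flag) }\in\alpha^{(2)}\quad\Longleftrightarrow\quad \beta_m\subseteq\alpha,
\]
i.e.\ it is governed by the \emph{top} cell of the flag, not the bottom one. With ``bottom'' the characterization is false already for a square: the barycenter of the flag $v<e<S$ lies in the interior of $S$, hence not in the edge $e$, even though the bottom cell $v$ is a face of $e$.

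This does not wreck your plan. If $C$ is the carrier of $x$ in $\widetilde{V}_1^{(2)}$, corresponding to a chain of flags $F_0\subset F_1\subset\cdots\subset F_k$, then the \emph{top} cells $\mathrm{top}(F_0)\leqslant\mathrm{top}(F_1)\leqslant\cdots\leqslant\mathrm{top}(F_k)$ are automatically totally ordered (because $F_j\subset F_{j+1}$ forces $\mathrm{top}(F_j)\leqslant\mathrm{top}(F_{j+1})$), with minimum $\mathrm{top}(F_0)$. Replacing ``bottom'' by ``top'' throughout, your argument then reads: $x\in N^\circ_{\alpha_i}$ iff $\mathrm{top}(F_0)\subseteq\alpha_i$, hence $x\in N^\circ_{\alpha_1}\cap N^\circ_{\alpha_2}$ iff $\mathrm{top}(F_0)\subseteq\alpha_1\cap\alpha_2=\alpha$ iff $x\in N^\circ_\alpha$. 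So the strategy is sound; only the top/bottom identification needs to be corrected, and with that correction the ``chain of bottom cells'' lemma you flagged as the main obstacle becomes immediate rather than something requiring a separate preliminary claim.
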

\begin{proof}
The result follows directly from the observation that
\[
     N(\alpha^{(2)}_1;\widetilde{V}_1^{(2)})\cap  N(\alpha^{(2)}_2;\widetilde{V}_1^{(2)})=N(\alpha^{(2)};\widetilde{V}_1^{(2)}).
\]
Indeed, recall that
\[
    N(\alpha^{(2)};\widetilde{V}_1^{(2)})=\bigcup_{\textup{vertices} \; v\in \alpha^{(2)}}\overline{\textup{star}}(v;\widetilde{V}_1^{(2)}).
\]
Since $\alpha$ is the common face of $\alpha_1$ and $\alpha_2$, the vertices of $\alpha^{(2)}$ are precisely the common vertices of $\alpha^{(2)}_1$ and $\alpha^{(2)}_2$, justifying the desired equality.
\end{proof}

By Lemma~\ref{lemmaNhbd}, the union of the $N^\circ_\alpha$ for each
closed cell $\alpha\in \widetilde{V}_1$ is a \v{C}ech cover of
$\widetilde{V}_1$. Thus, by a generalized Mayer-Vietoris argument in
relative homology \cite[p.~161]{BT}, we obtain a proof of the main
theorem, as follows.

\begin{proof}[Proof of Theorem~\ref{thickening}]
By Proposition~\ref{adhesions}, $H_n(N^\circ_\alpha,N^\circ_\alpha\cap \widetilde{V})=0$ for all degrees $n$. Then, from the long exact sequence of the pair $(N^\circ_\alpha,N^\circ_\alpha\cap \widetilde{V})$ in relative homology there is an isomorphism $ H_n(N^\circ_\alpha)\cong H_n(N^\circ_\alpha,N^\circ_\alpha\cap \widetilde{V})$ in all degrees, whence $H_n(N^\circ_\alpha,N^\circ_\alpha\cap \widetilde{V})=0$ for all $n$. Now, consider the relative homology of the pair $(\widetilde{V}_1,\widetilde{V})$, where $\widetilde{V}$ is identified with its image under the piecewise linear embedding constructed in Lemma~\ref{lemmaPLemb}. We claim that $H_n(\widetilde{V}_1,\widetilde{V})=0$ for all degrees $n$. Let $\mathfrak{U}$ denote the \v{C}ech open cover of $\widetilde{V}_1$ consisting of the $N^\circ_\alpha$. Denote by $N^\circ_{\alpha_{i_0\cdots i_k}}$ the open polyhedral neighborhood corresponding to the intersection $\alpha_{i_0\cdots i_k}$, which is well-defined by Lemma~\ref{lemmaNhbd}. The augmented double complex $C_\ast((\mathfrak{U},\widetilde{V}),A_\ast)$ endowed with the differential $D=\delta+(-1)^p\cdot d$ computes the singular relative homology $H_\ast(\widetilde{V}_1,\widetilde{V})$. This double complex has groups:
\[
    K_{p,q}=\prod_{i_0<\cdots<i_p} A_q(N^\circ_{\alpha_{i_0\cdots i_p}},N^\circ_{\alpha_{i_0\cdots i_p}}\cap \widetilde{V})
\]
with $A_q$ the $q^{\textup{th}}$ singular relative homology group. By Proposition~\ref{adhesions}, the vertical $d$-complexes are exact, and by the generalized Mayer-Vietoris principle, so are the horizontal $\delta$-complexes. Therefore, the spectral sequence of this double complex degenerates at the $E^2$ page, and we have $H_n(\widetilde{V}_1,\widetilde{V})=0$ in all degrees $n$. Finally, the long exact sequence of the pair $(\widetilde{V}_1,\widetilde{V})$ in relative homology gives an isomorphism $H_n(\widetilde{V})\cong H_n(\widetilde{V}_1)$ in all degrees. But, by~\cite{MM93} we know $\widetilde{V}$ is contractible, whence $\widetilde{V}_1$ is acyclic.
\end{proof}


\section{A Well-Tempered Complex for $\Spf$}
\label{sec:wtcV}

In the previous section, we defined a closed subcomplex
$\widetilde{V}_1$ of $\widetilde{W}_1$.  Our~$\widetilde{V}_1$ is
acyclic, and (by definition) it has an action of $\Spf(\Z)$ with only
finitely many orbits of cells.  In Section~\ref{subsec:wtcVdef}, we
describe how one could extend this to all temperaments, defining a
closed subcomplex $\widetilde{V}^+$ of $\widetilde{W}^+$, so that
$\Spf(\Z)$ acts on $\widetilde{V}^+$ with only finitely many orbits of
cells, and so that for each temperament~$\tau$ the fiber
$\widetilde{V}_\tau$ of $\widetilde{V}^+$ over~$\tau$ is acyclic.  The
definition of $\widetilde{V}^+$ would proceed by induction on~$i$ from
one critical temperament~$\tau^{(i)}$ to the next.
Section~\ref{sec:end} outlines a Hecke operator algorithm based on
this for arithmetic subgroups of $\Spf(\Z)$.

We emphasize that Section~\ref{sec:wtcV} is speculative, unlike
Sections~\ref{sec:intro}--\ref{sec:main}.  Details will appear in a
later paper.

\subsection{Defining the Well-Tempered Complex for $\Spf$}
\label{subsec:wtcVdef}

Extending the definition up to a critical temperament is relatively
straightforward.  At a critical temperament $\tau^{(i)}$ for $i > 0$,
we define the cells of $\widetilde{V}_{\tau^{(i)}}$ to be the cells of
$\widetilde{W}_{\tau^{(i)}}$ that are in the closure of those for
$\widetilde{V}_{[\tau^{(i-1)}, \tau^{(i)}]}$.

To start the induction at $i=0$, we note that the first temperament
$\tau^{(0)} = 1$ is not technically a critical temperament.
When~$\tau$ is~$\geqslant 1$ but very near~$1$,
Formula~\eqref{defMwithtau} shows that the sets of minimal
vectors~$M(L)$ do not change.  They will not change until~$\tau$
reaches some specific value, which is $\tau^{(1)} > 1$.  The cells of
$\widetilde{V}^+$ over $[\tau^{(0)}, \tau^{(1)}]$ are in one-to-one
correspondence with those over $\tau^{(0)} = 1$, locally cylindrical
extensions of one higher dimension.  The passage to $\tau^{(1)}$ can
thus be handled as in the previous paragraph.

When we extend by closure from the cells over $\tau \in (\tau^{(i-1)},
\tau^{(i)})$ to the closure over $\tau^{(i)}$, our inductive
hypothesis is that $\widetilde{V}_{[\tau^{(i-1)}, \tau^{(i)}]}$ is an
acyclic complex.  We need to prove that $\widetilde{V}_{\tau^{(i)}}$
is also acyclic.  It suffices to work modulo a torsion-free arithmetic
subgroup of $\Spf(\Z)$, such as $\Gamma(3)$.  By looking at the sets
of minimal vectors, we will define a cellular map $\Gamma(3)\bs
\widetilde{V}_{[\tau^{(i-1)}, \tau^{(i)}]} \to \Gamma(3)\bs
\widetilde{V}_{\tau^{(i)}}$.  We anticipate that this cellular map
will be a cellular collapsing map, but we will need to prove it is a
collapsing map.  One way to do this is by discrete Morse theory
\cite{For98} \cite{For02}.  The quotients $\Gamma(3)\bs
\widetilde{V}_{[\tau^{(i-1)}, \tau^{(i)}]}$ and $\Gamma(3)\bs
\widetilde{V}_{\tau^{(i)}}$ are finite, and they are regular cell
complexes.  We will put a discrete Morse function on $\Gamma(3)\bs
\widetilde{V}_{\tau^{(i)}}$.  We anticipate being able to extend it in
some sensible way to a function on $\Gamma(3)\bs
\widetilde{V}_{[\tau^{(i-1)}, \tau^{(i)}]}$, for instance by adding
new Morse values for $\Gamma(3)\bs \widetilde{V}_{[\tau^{(i-1)},
    \tau^{(i)}]}$ in the same order that they appear in $\Gamma(3)\bs
\widetilde{V}_{\tau^{(i)}}$.  Once the function has been extended, it
is straightforward to see whether the extension is a discrete Morse
function that defines a collapsing map.  If it is not, we will study
the failure and improve the extended function on an ad hoc basis.

Extending the definition from $\widetilde{V}_{\tau^{(i)}}$ to
$\widetilde{V}_{[\tau^{(i)}, \tau^{(i+1)}]}$, for $i>0$, requires more
care.  There are many cells in $\widetilde{W}_{[\tau^{(i)},
    \tau^{(i+1)}]}$ whose closures meet $\widetilde{V}_{\tau^{(i)}}$,
but we only want to take some of them,
the smallest possible set so that $\widetilde{V}_{[\tau^{(i)},
    \tau^{(i+1)}]}$ will be acyclic and of dimension~$7$.  Certainly
we will include all top-dimensional cells~$\mathscr{T}$ of
$\widetilde{W}_{[\tau^{(i)}, \tau^{(i+1)}]}$ whose closures meet
$\widetilde{V}_{\tau^{(i)}}$ in a top-dimensional cell in codimension
one; here the sets of minimal vectors are not changing as $\tau$
increases across the codimension-one boundary (another locally
cylindrical case).  Examples show, however, that there can be holes
in~$\mathscr{T}$; the complex may not be acyclic.

We will make a provisional definition of $\widetilde{V}_{[\tau^{(i)},
    \tau^{(i+1)}]}$, and then will fill the holes in~$\mathscr{T}$, if
there are any, by the following procedure.  Let~$P$ be the Borel
subgroup of upper-triangular matrices in $\Spf(\R)$, and $P(\Z)$ its
integer points.  $P(\Z)\bs P(\R)$ is a nilmanifold whose universal
cover is $P(\R)$, homeomorphic to~$\R^4$.  Let $\sigma_4$ be the
top-dimensional cell in~$\widetilde{V}$ whose minimal vectors are the
columns of the identity matrix; every 4-cell in~$\widetilde{V}$ is
equivalent to it.  Define the \emph{standard \gk}\footnote{The name
  means \emph{great chamber} in the Tits building.  More accurately,
  it is a particular gallery in that building, determined by the
  minimal parabolic~$P$.} in $\widetilde{V}$ to be $\{\gamma\sigma_4
\mid \gamma\in P(\Z)\}$.  This is homeomorphic to the universal cover
of the nilmanifold $P(\Z)\bs P(\R)$.

Define the \emph{standard \gk} in $\widetilde{V}_1$ to be
$\{\gamma\sigma \mid \gamma\in P(\Z)\}$.  Intuitively, this is a
thickening of the standard \gk\ in $\widetilde{V}$.  It is
homeomorphic to $\R^4\times\R^2$, with an action of $P(\Z)$ on the
$\R^4$ factor, and the quotient modulo $P(\Z)$ is a trivial
$\R^2$-bundle over the nilmanifold.

In either $\widetilde{V}$ or $\widetilde{V}_1$, a \emph{\gk}
is~$\gamma$ times the standard \gk, for any $\gamma\in\Spf(\Z)$.  By
Definition~\ref{defV1}, $\widetilde{V}_1$ is the union of the
\gkn\ coming from all translates by coset representatives of
$\Spf(\Z)/ P(\Z)$.

The Borel subgroup $P(\R)$ is a maximal solvable subgroup.  It is
filtered by a sequence of normal subgroups so that the subquotients
are copies of the additive group~$\R$.  One such sequence is
\begin{equation} \label{nilsubgps}
\cdots \subset
\left[\begin{smallmatrix} 1&0&0&* \\ 0&1&*&0 \\ 0&0&1&0
    \\ 0&0&0&1 \end{smallmatrix}\right]
\subset
\left[\begin{smallmatrix} 1&0&*&* \\ 0&1&*&* \\ 0&0&1&0
    \\ 0&0&0&1 \end{smallmatrix}\right]
\subset
P(\R).
\end{equation}
These subgroups foliate~$P(\R)$ by copies of $\R^3$, which are in turn
foliated by copies of $\R^2$, which are in turn foliated by copies of
$\R^1$.

To fill the holes in~$\mathcal{T}$, we will first find appropriate
definitions of the standard \gk\ for $\widetilde{V}_{[\tau^{(i)},
    \tau^{(i+1)}]}$.  We will consider the action of $P(\Z)$ on sample
top-dimensional cells in~$\mathcal{T}$, choosing them so that they
fill out as much of the thickened~$\R^4$ as possible.  It will be
easiest to act on cells in~$\mathcal{T}$ by the one-dimensional
subgroup in~\eqref{nilsubgps}, making cellular models of the leaves
$\R^1$.  If the model is a thickened $\R^1$ with gaps, it will be easy
to see which cells fill in those gaps.  Next, we will act by the the
two-dimensional subgroup in~\eqref{nilsubgps}, making cellular models
of the leaves $\R^2$, and so on.  We will perform these checks for
temperaments $i=0, 1, 2, \dots$.

At the end, we will have a provisional definition of a \gk, a cellular
model of a thickened~$\R^4$.  We will define
$\widetilde{V}_{[\tau^{(i)}, \tau^{(i+1)}]}$ to be the union of these
provisional \gkn\ coming from all translates by coset representatives
of $\Spf(\Z)/ P(\Z)$.  Since the definition is provisional, it will
be necessary to prove that $\widetilde{V}_{[\tau^{(i)},
    \tau^{(i+1)}]}$ is acyclic.  We can do this using discrete Morse
theory as described above.

\subsection{Outline of a Hecke Operator Algorithm for $\Spf$}
\label{sec:end}

By \cite[Thms. ~3.37 and~3.40]{Andr}, the Hecke algebra for $\Spf(\Z)$
is generated by the Hecke correspondences $T_a$ where we take the
following two~$a$'s for each prime~$\ell$:
\[
\mathrm{diag}(1, 1, \ell, \ell),\quad
\mathrm{diag}(1, \ell, \ell, \ell^2).
\]
(There is a change of coordinates, because~\cite{Andr} uses the
symplectic form $\left[\begin{smallmatrix} 0 & I \\ -I &
    0 \end{smallmatrix}\right]$ rather than our~$\Omega$.)  The
subgroups $\Gamma = \Spf(\Z) \cap a^{-1} \Spf(\Z) a$, when reduced
mod~$\ell$, are the Siegel and Klingen parabolics, respectively.
By~\cite[Thm.~4]{MM20}, $\tau_0 = \ell$ and $\ell^2$ in the respective
cases.

To compute the Hecke operators, we replace $\widetilde{W}^+$ with
$\widetilde{V}^+$ and compute Formula~\eqref{plrlrq} in this setting.


\printbibliography

\end{document}